\definecolor{immi}{rgb}{0,.6,.1}
\newbox\removebox
\newcommand\remove[2]{%
\setbox\removebox=\ifmmode\hbox{$#2$}\else\hbox{#2}\fi%
\leavevmode
\rlap{\textcolor{#1}{\vrule height0.8ex depth-0.6ex width\wd\removebox}}%
\box\removebox
}
\long\def\bigremove#1{%
\par\setbox\removebox=\vbox{#1}%
\vbox{%
\vbox to0pt{\hbox{\tikz\draw[color=blue,thick] (0,0) -- (\wd\removebox,-\ht\removebox)  (\wd\removebox,0) -- (0,-\ht\removebox);}}
\box\removebox
}
}
\def\RFss@@#1{\RF^*_{\!*#1}}
\def\RFss@_#1{\RFss@@{,#1}}
\def\RFss{\@ifnextchar_{\RFss@}{\RFss@@{}}}
\newcommand{\RF}{{\rm RF}}
\def\11{{\mathbf 1}}
\newcommand{\Q}{{\mathbb Q}}
\newtheorem{thm}[subsection]{Theorem}
\newtheorem{lem}[subsection]
{Lemma}
\newtheorem{cor}[subsection]
{Corollary}
\newtheorem{prop}[subsection]
{Proposition}
{Conjecture}
{Problem}
\theoremstyle{plain}
\newtheorem*{namedthm}{\namedthmname}
\newcounter{namedthm}
\theoremstyle{definition}
\theoremstyle{remark}
{Remark}
\newtheorem{rem}[subsection]
{Remark}
{Remarks}
\theoremstyle{plain}
\renewcommand{\phi}{\varphi}
\renewcommand{\epsilon}{\varepsilon}
\renewcommand{\theta}{\vartheta}
\def\Q{{\mathbf Q}}
\renewcommand{\and}{ \quad \text{and} \quad }
\begin{document}

\setcounter{tocdepth}{1} 

\author[T.~Thu~Nguyen]
{Thi Thu Nguyen}
 \address{Université de Lille, Laboratoire Paul Painlevé, 
59655 Villeneuve d'Ascq Cedex, France}
\email{thuclcsphn@gmail.com}
\email{thithu.nguyen@univ-lille.fr}
\subjclass[2020]{Primary 11P32; Secondary 11P55}
\keywords{Goldbach numbers, congruences, Dirichlet L-function, Generalized Riemann Hypothesis, Siegel zero}

\begin{abstract} We obtain asymptotic results on the average numbers of Goldbach representations of an integer as the sum of two primes in  arithmetic progressions.  We also prove an omega-result  showing that the asymptotic result is essentially the best possible.
\end{abstract}

\title[Goldbach in Arithmetic Progressions]{Average orders of Goldbach Estimates in Arithmetic Progressions} 

\maketitle

\section{Introduction}\label{sec:intro}

The Goldbach conjecture that every even integer greater than two can be written as the sum of two primes, is one of the oldest open problems today. Instead of studying directly the Goldbach counting function $g(n)=\sum_{p_1+p_2=n}1$ where $p_1, p_2$ are primes, we consider the corresponding problem for a smoother version using logarithms, that is the weighted Goldbach function
$$G(n) =\sum_{m+l=n}\Lambda(m)\Lambda(l),$$
where $\Lambda(n)$ is the von Mangoldt function. We are interested in the behavior of the average order of magnitude of $G(n)$ for $n\in [1,X]$ where $X$ is a large real number, that is $$S(X):= \sum_{n\leq X} G(n).$$ Fujii  showed the following estimate for $S(X)$.

\begin{thm}\cite[Fujii]{Fujii}
Suppose that the Riemann Hypothesis (RH) is true. Then we have
$$S(X) = \dfrac{X^2}{2}+H(X)+ E(X),$$
where $H(X)=-2\sum_\rho \dfrac{X^{\rho+1}}{\rho(\rho+1)}$ is the sum  over the non-trivial zeros $\rho$ of the Riemann zeta function counted with multiplicity and $E(X) \ll (X\log X)^{4/3}$.
\end{thm}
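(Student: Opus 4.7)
The plan is to express $S(X)$ as a convolution involving $\psi(x) := \sum_{n \leq x}\Lambda(n)$, apply the Riemann--Mangoldt explicit formula twice under RH, and bound the resulting double sum over pairs of nontrivial zeros.

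I would start from $S(X) = \sum_{m \leq X}\Lambda(m)\,\psi(X-m)$ and insert the truncated explicit formula
$$\psi(y) = y - \sum_{|\gamma| \leq T}\frac{y^{\rho}}{\rho} + O\!\left(\frac{y\log^2(yT)}{T} + \log y\right),$$
valid under RH for $2 \leq y$. Abel summation shows $\sum_{m \leq X}\Lambda(m)(X-m) = \psi_1(X) := \int_0^X\psi(t)\,dt$, and the classical RH bound $\psi_1(X) = X^2/2 - \sum_\rho X^{\rho+1}/(\rho(\rho+1)) + O(X)$ already supplies $X^2/2 + H(X)/2$. For the zero-sum piece, a second Abel summation gives $\sum_{m \leq X}\Lambda(m)(X-m)^{\rho} = \rho\int_0^X(X-t)^{\rho-1}\psi(t)\,dt$; substituting the explicit formula again inside, the ``$t$'' part contributes $\rho\int_0^X t(X-t)^{\rho-1}dt = X^{\rho+1}/(\rho+1)$, which after the $-1/\rho$ weight and summation over $|\gamma| \leq T$ yields $-\sum_\rho X^{\rho+1}/(\rho(\rho+1)) = H(X)/2$, producing the other half of the secondary term.

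The remaining contributions are (i) the first-stage truncation error $\mathcal{R}_1 \ll X^2\log^2(XT)/T$, (ii) the tail $\sum_{|\gamma|>T}X^{\rho+1}/(\rho(\rho+1)) \ll X^{3/2}\log T/T$, and (iii) the double sum
$$\mathcal{D}(X, T) = \sum_{|\gamma|,|\gamma'|\leq T}\frac{X^{\rho+\rho'}\,B(\rho'+1,\rho)}{\rho'}$$
arising when the zero-sum part of the explicit formula is substituted inside the integral. Under RH, $|X^{\rho+\rho'}| = X$; Stirling's formula gives $|B(\rho'+1,\rho)| \ll |\gamma'|(|\gamma|+|\gamma'|)^{-3/2}$ for same-sign pairs $\gamma\gamma'>0$, with exponentially small contribution $e^{-\pi(|\gamma|+|\gamma'|-|\gamma+\gamma'|)/2}$ for opposite-sign pairs. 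Integrating against the zero density $dN(t) \sim \log t \, dt/(2\pi)$ then yields the crude estimate $\mathcal{D}(X,T) \ll XT^{1/2}\log^2 T$.

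Finally, I would balance $\mathcal{R}_1 \ll X^2\log^2(XT)/T$ against $\mathcal{D} \ll XT^{1/2}\log^2 T$ by taking $T \asymp X^{2/3}$, producing a total error of order $X^{4/3}\log^2 X$. The sharper $(X\log X)^{4/3}$ bound claimed requires refining the double-sum estimate, typically via an averaging argument in $X$ that replaces the pointwise $B$-bound by a mean-value estimate, or via a careful separation and sharper handling of the near-diagonal pairs $\gamma + \gamma' \approx 0$ where $B(\rho'+1,\rho)$ is largest. Controlling these near-diagonal contributions in the double sum over zeros is the main analytic obstacle of the proof; the single-sum tail and the truncation remainders are comparatively routine.
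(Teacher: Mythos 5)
This theorem is not proved in the paper at all: it is Fujii's result, quoted in the introduction as background and attributed to \cite{Fujii}. So there is no ``paper proof'' to match against. What can be said is that your route is the classical direct one (essentially Fujii's original argument): write $S(X)=\sum_{m\le X}\Lambda(m)\psi(X-m)$, insert the truncated explicit formula twice, and control the resulting double sum over zeros. The present paper's own machinery for the analogous statements it does prove is quite different: it works with the generating functions $F_{a,q}(z)$, isolates the main terms via Proposition \ref{t2}, and bounds the remainder integral $E$ through Gallagher's lemma and the second moments $H_{a,q}$, $K_{a,q}$ of the Chebyshev function, where the double sum over zeros appears in the mollified form $\sum_{\gamma,\gamma'}|\gamma\gamma'|^{-1}(1+|\gamma-\gamma'|)^{-1}$ (Lemma \ref{2zero}). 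That second-moment route is what lets the paper (and Bhowmik--Schlage-Puchta, Languasco--Zaccagnini, etc.) reach $X\log^3X$ at $q=1$, strictly better than Fujii's $(X\log X)^{4/3}$; your direct substitution cannot reach that without an averaging step, but it is a perfectly legitimate way to prove the weaker statement quoted here.

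Two points in your sketch deserve attention. First, the main-term bookkeeping is correct: $\psi_1(X)$ contributes $X^2/2+H(X)/2$ and the term $\rho\int_0^Xt(X-t)^{\rho-1}dt=X^{\rho+1}/(\rho+1)$ supplies the other $H(X)/2$, so the secondary term comes out right. Second, however, your error analysis as written only yields $X^{4/3}\log^2X$, which is larger than the claimed $(X\log X)^{4/3}=X^{4/3}\log^{4/3}X$; you acknowledge this, but the proposed repair is misdirected. The near-antidiagonal pairs $\gamma'\approx-\gamma$ are \emph{not} where $B(\rho'+1,\rho)$ is large: for opposite signs one has $|\gamma|+|\gamma'|-|\gamma+\gamma'|=2\min(|\gamma|,|\gamma'|)$, and when $\gamma+\gamma'$ is bounded the denominator $\Gamma(\rho+\rho'+1)$ is of size $1$ rather than governed by the large-argument asymptotic, so these terms carry a factor $e^{-\pi(|\gamma|+|\gamma'|)/2}$ and are negligible. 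The genuine bottleneck is the same-sign sum $\sum(\,|\gamma|+|\gamma'|\,)^{-3/2}\ll T^{1/2}\log^2T$, where the two factors of $\log$ come from the zero density; saving the missing $\log^{2/3}X$ requires either a sharper treatment of that sum or a better balance with the truncation error, and this step is not supplied. As a proof of the theorem exactly as stated, the proposal therefore has a (small, purely logarithmic, but real) gap at the final estimate.
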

\begin{rem}
Assuming the RH, the order of magnitude of $H(X)$ is $\mathcal{O}(X^{3/2})$, while unconditionally, it is
$$H(X) \ll X^{1+B},$$ where $B=\sup \{Re(\rho): \zeta(\rho)=0 \}.$
\end{rem}

Granville recovered Fujii's result without assuming RH,  $E(X) \ll X^{\frac{2+4B}{3}}\log ^2X$ \cite{granville1,granville2}. Egami and Matsumoto expected the error term to be  $\mathcal{O}(X^{1+\epsilon})$ (\cite[Conjecture 2.2]{Egami-Mat}). In fact, this 
conjecture was proved by Bhowmik and Schlage-Puchta \cite{BP} who showed  in 2010 that the error term is $\mathcal{O} (X\log^5 X)$ 
under RH and $\Omega(X\log \log X)$ unconditionally. It was improved by Languasco-Zaccagnini \cite{Lang-Zacca1} 
  in 2012 and  using different methods later by  Goldson-Yang \cite{GY}  in 2017 or Goldston-Suriajaya \cite{GS}  in 2023 to  $E(X) \ll X\log^3X.$ 

We study the case of two primes  in arithmetic progressions, which  was first introduced by Rüppel \cite{ruppel2012} in 2012. 
Suzuki \cite{YS} in 2017 obtained an asymptotic formula under the Generalized Riemann Hypothesis with real zeros while for the case of common modulus,  Bhowmik-Halupczok-Matsumoto-Suzuki \cite{BHMS} proved an asymptotic formula without assuming Generalized Riemann Hypothesis (GRH). 

In this paper, we will show an asymptotic  result for the case of different modulus without the GRH. This results in the factor $\log^3(X)$ in the error term thus we can recover Suzuki's result \cite{YS} and improve the upper bound of the error term in \cite[Theorem 2]{BHMS} by a factor $\log^2X$. To do that, we use the properties of the non-trivial zeros of Dirichlet $L$-function to prove a good estimate unconditionally of the second moments of Chebyshev function in arithmetic progressions (see Lemmas \ref{lemh} and \ref{lemk} in section $3$ for more details). Moreover, similar to the classical problem,  we obtain an omega-result in the arithmetic progressions showing that the asymptotic formula is essentially the best possible.

Let  $q_1, q_2$ be  positive integers and  $1 \leq a_1<q_1, 1\leq  a_2 <q_2 $ be positive integers such that $(a_1, q_1)=1$, $(a_2, q_2)=1$. We consider the function 
$$G(n,q_1,q_2,a_1,a_2):= \sum_{\substack{m+l=n\\m\equiv a_1(q_1)\\l\equiv a_2(q_2)}}\Lambda(m)\Lambda(l),$$ whose summatory function is defined as 
$$S(X,q_1,q_2,a_1,a_2)=\sum_{n \leq X} G(n,q_1,q_2,a_1,a_2).$$

We denote by $\chi$  a Dirichlet character modulo $q$. Let $\rho_\chi$ be the non-trivial zeros of $L(s,\chi)$ and let  $B_q=\sup \{Re({\rho_\chi}) \mid L(\rho_\chi, \chi)=0 \}$. We denote  by $\widetilde{\beta}$ the possible exceptional zero (or Siegel zero), $\widetilde{\chi}$ the corresponding exceptional real character  modulo $q$   and $B_q^\sharp =\sup \{Re({\rho_\chi}) \mid L(\rho_\chi, \chi)=0, \rho_\chi \neq \widetilde{\beta} \}$.

In this article, we prove an asymptotic formula for $S(X,q_1,q_2,a_1,a_2)$ and an omega-result, which can be stated as follows.
\begin{thm}\label{t1}
\begin{enumerate}
\item[(A)]
For any $\epsilon>0$, we have
\begin{align}
\begin{split}
S(X,q_1,q_2,a_1,a_2)=\dfrac{X^2}{2\varphi(q_1)\varphi(q_2)}+\dfrac{1}{\varphi(q_2)}H(X,q_1,a_1)+\dfrac{1}{\varphi(q_1)}H(X,q_2,a_2)+\mathcal{Z}(X, \widetilde{\beta}_1, \widetilde{\beta}_2)\\
+\mathcal{O}(X^{B^*_{q_1}+B^*_{q_2}}\log X \log(q_1X) \log (q_2X)),
\end{split}
\end{align}
where the implicit constant  is absolute  and 
\begin{align*}
    &H(X,q,a)=- \dfrac{1}{\varphi(q)}\sum_{\chi(q)}\overline{\chi}(a) \sum_{\rho_\chi} \dfrac{X^{{\rho_\chi}+1}}{{\rho_\chi}({\rho_\chi}+1)},\\
    &\mathcal{Z}(X, \widetilde{\beta}_1, \widetilde{\beta}_2)=\sum_{\substack{\chi_1(q_1)\\\chi_2(q_2)}}\dfrac{\overline{\chi}_1(a_1)\overline{\chi}_2(a_2)}{\varphi(q_1)\varphi(q_2)}  \sum_{ (\rho_{\chi_1},\rho_{\chi_2})\in \mathcal{Z}_0}\dfrac{\Gamma(\rho_{\chi_1})\Gamma(\rho_{\chi_2})}{\Gamma(\rho_{\chi_1}+\rho_{\chi_2}+1)}X^{\rho_{\chi_1}+\rho_{\chi_2}}, 
\end{align*}
with $$\mathcal{Z}_0=\{(\rho_{\chi_1},\rho_{\chi_2}): \rho_{\chi_i} \text{ is a non-trivial zero of  } L(s, \chi_i) \text{ and at least one of } \rho_{\chi_1},\rho_{\chi_2} \text{is a Siegel zero} \},$$
$$B^*_q=B^*_q(X)=\min (B_q^\sharp, 1-\eta), \hspace{3cm} \eta=\eta_q(X)=\dfrac{c_1(\epsilon)}{\log q+ (\log X)^{2/3} (\log \log X)^{1/3}}$$,\\ with some small constant $c_1(\epsilon) >0$.

\item[(B)] We have
\begin{align}
\begin{split}
S(X,q_1,q_2,a_1,a_2)=\dfrac{X^2}{2\varphi(q_1)\varphi(q_2)}+\dfrac{1}{\varphi(q_2)}H(X,q_1,a_1)+\dfrac{1}{\varphi(q_1)}H(X,q_2,a_2)+\mathcal{Z}(X, \widetilde{\beta}_1, \widetilde{\beta}_2)\\
+  \Omega(X\log \log X),
\end{split}
\end{align}
where the implicit constant is depends on $q_1,q_2$.
\end{enumerate}
\end{thm}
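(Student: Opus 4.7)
For part (A), the plan is to unfold the congruence conditions via character orthogonality and reduce to a product of Dirichlet $\psi$-functions. Setting $\psi(x,\chi) = \sum_{n\leq x}\chi(n)\Lambda(n)$, one has
\begin{equation*}
S(X,q_1,q_2,a_1,a_2) = \frac{1}{\varphi(q_1)\varphi(q_2)}\sum_{\substack{\chi_1(q_1)\\\chi_2(q_2)}}\overline{\chi_1}(a_1)\overline{\chi_2}(a_2)\int_0^X \psi(X-y,\chi_1)\,d\psi(y,\chi_2).
\end{equation*}
The explicit formula $\psi(x,\chi_0) = x - \sum_{\rho_{\chi_0}} x^{\rho_{\chi_0}}/\rho_{\chi_0} + O(\log^2(qx))$ (with the $x$-term absent for non-principal $\chi$) is then substituted in both factors. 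Multiplying out and integrating term-by-term using the Beta identity $\int_0^X (X-y)^a y^{b-1}\,dy = X^{a+b}\Gamma(a+1)\Gamma(b)/\Gamma(a+b+1)$, the contributions group into: (i) the pure main term $X^2/(2\varphi(q_1)\varphi(q_2))$; (ii) two single-zero-sum cross terms, which after applying $\int_0^X (X-y)y^{\rho-1}\,dy = X^{\rho+1}/(\rho(\rho+1))$ become precisely $H(X,q_j,a_j)/\varphi(q_i)$; (iii) a double sum over pairs of zeros of the shape $\Gamma(\rho_{\chi_1})\Gamma(\rho_{\chi_2})/\Gamma(\rho_{\chi_1}+\rho_{\chi_2}+1)\cdot X^{\rho_{\chi_1}+\rho_{\chi_2}}$, from which the Siegel-zero pairs are peeled off to form $\mathcal{Z}(X,\widetilde\beta_1,\widetilde\beta_2)$.

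The crux of the argument is to bound the restriction of (iii) to non-Siegel pairs by $X^{B^*_{q_1}+B^*_{q_2}}\log X\log(q_1X)\log(q_2X)$. Rather than estimating the double sum zero-by-zero, I would reduce to the second moment of $\psi(x,\chi)$ minus its main term on arithmetic progressions, as controlled by the paper's Lemmas \ref{lemh} and \ref{lemk}. Via a Perron-type identity and Parseval, such second moments dominate $\sum_{\rho}X^{2\mathrm{Re}(\rho)}/|\rho|^2$ up to a single logarithmic loss, and Cauchy--Schwarz then converts this into the desired bound for the factored double sum. The threshold $B^*_q = \min(B_q^\sharp,1-\eta_q(X))$ is chosen to incorporate the Vinogradov--Korobov zero-free region whenever the true density exponent $B_q^\sharp$ is unavailable; the precise form of $\eta_q(X)$ guarantees $X^{B^*_q}$ beats $X/(\log X)^N$ for every fixed $N$, which is what ultimately yields the $\log^2 X$ improvement over \cite{BHMS}.

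For part (B), I would adapt the unconditional omega argument of Bhowmik--Schlage-Puchta. If the error in (A) were $o(X\log\log X)$ for all large $X$, then, after subtracting the explicit main, $H$- and Siegel-contributions, a suitable smoothing (integration against a bump function in $X$, or a Riesz mean) would translate the hypothetical bound into an upper bound on $\sum_{\rho_\chi}X^{\mathrm{Re}(\rho_\chi)}/|\rho_\chi|$ for some $L(s,\chi)$ with $\chi$ modulo $q_1$ or $q_2$. The single-zero-sum contribution, coming from pairing a zero of $L(s,\chi_1)$ with the pole at $s=1$ carried by the principal character modulo $q_2$ (or vice versa), is already known to have $\Omega(X\log\log X)$ fluctuations by the classical argument applied to $H(X,q_j,a_j)$; the Riemann--von Mangoldt zero-counting combined with the generic linear independence of ordinates of zeros of distinct $L$-functions then forces the claimed omega lower bound.

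The main obstacle is paragraph two: establishing the second-moment estimate for $\psi(x,\chi)$ uniformly in the modulus and with the sharp logarithmic loss. This is the technical heart of the improvement over \cite{BHMS} and is where the paper's Lemmas \ref{lemh} and \ref{lemk} come in. A secondary point to monitor is the interchange of the zero sum with the Stieltjes integral in step one, which must be handled by a standard truncated-explicit-formula argument together with a tail estimate controlled by zero-density.
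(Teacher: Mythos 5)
Your reduction of part (A) to the convolution $\int_0^X \psi(X-y,\chi_1)\,d\psi(y,\chi_2)$ and the Beta-integral extraction of the main terms, of $H(X,q_j,a_j)/\varphi(q_i)$ and of the Siegel-pair sum $\mathcal{Z}$ matches what the paper obtains in Proposition \ref{t2} (there via generating series and the identity $\int_0^1 e(t\alpha)\,d\alpha=\mathds{1}_{t=0}$, which is the same computation in Fourier dress). The genuine gap is in your paragraph two. Bounding the non-Siegel double zero sum by "second moments plus Cauchy--Schwarz" in the form you describe cannot work: if you apply Cauchy--Schwarz to the length-$X$ convolution of the two remainders $R_j(t)=\psi(t,q_j,a_j)-t/\varphi(q_j)+\cdots$, you get $\bigl(\int_0^X R_1^2\bigr)^{1/2}\bigl(\int_0^X R_2^2\bigr)^{1/2}\ll X^{B^*_{q_1}+B^*_{q_2}+1}\log^2(qX)$ by Lemma \ref{lemh}, which is worse than the claimed error by a factor of $X/\log X$; and estimating the Gamma-quotient double sum zero-by-zero is exactly what limited Fujii to exponent $4/3$. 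The whole point of the Bhowmik--Schlage-Puchta/Languasco--Zaccagnini mechanism, which the paper follows, is that the Cauchy--Schwarz happens \emph{inside} the circle integral against the kernel $|I(X,1/z)|\ll\min(X,1/|\alpha|)$ (Lemma \ref{lem1}), and Gallagher's lemma then converts $\int_0^{1/2h}|F_{a,q}-I_q+\widetilde{I}_q|^2\,d\alpha$ into second moments of $\psi$ over \emph{short} intervals of length $h\asymp 1/\alpha$; it is the short-interval bound $K_{a,q}(x,h)\ll h\,x^{2B^*_q}\log^2(qx)$ of Lemma \ref{lemk}, gaining the factor $h/x$ over the long-interval moment, that produces $X^{2B^*_q}\log X\log^2(qX)$ for each factor after dyadic summation in $\alpha$. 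Your proposal contains no substitute for this localization, so the stated error term is not reached. (Also, the $\log^2$ improvement over \cite{BHMS} comes from these sharpened moment lemmas, not from the Vinogradov--Korobov choice of $\eta_q(X)$, which only guarantees $B^*_q<1$ unconditionally.)

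Part (B) as you sketch it is also not a proof. The omega result of \cite{BP}, which the paper adapts, is not obtained from oscillation of zero sums or from "generic linear independence of ordinates" (the latter is not a theorem and cannot be invoked); moreover the fluctuations of $H(X,q_j,a_j)$ are irrelevant here because $H$ is part of the main term being subtracted, so they say nothing about the size of the remaining error. The actual argument is arithmetic: by Gallagher's theorem (Lemma \ref{GP}) one has $\psi(2x,qQ,A)\ge x/(2\varphi(qQ))$ for $Q=\prod_{p\le x,\,p\ne p_1,\,p\nmid q_1q_2}p$, whence summing over residues $b$ modulo $Q$ gives $\sum_{n\le 4x,\,Q\mid n}G(n,q_1,q_2,a_1,a_2)\gg x^2/(\varphi(q_1)\varphi(q_2)\varphi(Q))$ and therefore $\max_{n\le 4x}G(n,q_1,q_2,a_1,a_2)\gg_{q_1,q_2}x\,(Q/\varphi(Q))\gg x\log\log x$ by Mertens; since an $o(X\log\log X)$ error term would force $G(n+1)=S(n+1,\cdots)-S(n,\cdots)=o(n\log\log n)$ after the main terms cancel to that order, this yields the contradiction. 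You would need to supply this (or an equivalent) singular-series lower bound to complete part (B).
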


\begin{rem}
For the unconditional order of magnitude of $H(X,q,a)$, we have 
\begin{align*}
H(X,q,a)&=- \dfrac{1}{\varphi(q)}\sum_{\chi(q)}\overline{\chi}(a) \sum_{\rho_\chi} \dfrac{X^{{\rho_\chi}+1}}{{\rho_\chi}({\rho_\chi}+1)}\\
& \ll \dfrac{X^{1+B_q}}{\varphi(q)}\sum_{\chi(q)}\sum_{\rho_\chi} \dfrac{1}{|{\rho_\chi}({\rho_\chi}+1)|}.
\end{align*}
By using \cite[Lemma 3]{BHMS}, we obtain 
\begin{align*}
H(X,q,a) &\ll \dfrac{X^{1+B_q}}{\varphi(q)}\sum_{\chi(q)}\left( \log^2 q+ \delta_1(\chi)q^{1/2} \log ^2 q \right) \ll X^{1+B_q} \log ^2 q,
\end{align*}
where $ \delta_1(\chi)=1$ if $\chi$ is the exceptional character and $ \delta_1(\chi)=0$ otherwise.
Assuming GRH, we can obtain $$H(X,q,a) \ll_q X^{3/2}.$$ 
\end{rem}

\begin{cor}\label{whenGRH}
Assume GRH. Then we have 
\begin{align}
\begin{split}
S(X,q_1,q_2,a_1,a_2)=\dfrac{X^2}{2\varphi(q_1)\varphi(q_2)}+\dfrac{1}{\varphi(q_2)}H(X,q_1,a_1)+\dfrac{1}{\varphi(q_1)}H(X,q_2,a_2)\\
+\mathcal{O}(X\log X \log (q_1X) \log (q_2X)),
\end{split}
\end{align}
where the implicit constant is absolute.
\end{cor}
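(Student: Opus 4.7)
The plan is to derive Corollary \ref{whenGRH} directly from Theorem \ref{t1}(A) by specializing each ingredient to the situation where GRH holds, so essentially all the work is in interpreting the three contributions already identified there.

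First I would observe that GRH asserts $\mathrm{Re}(\rho_\chi)=1/2$ for every non-trivial zero $\rho_\chi$ of every $L(s,\chi)$, and in particular rules out the existence of an exceptional real (Siegel) zero $\widetilde{\beta}$. Consequently the index set $\mathcal{Z}_0$ in the definition of $\mathcal{Z}(X,\widetilde{\beta}_1,\widetilde{\beta}_2)$ is empty, so the whole term $\mathcal{Z}(X,\widetilde{\beta}_1,\widetilde{\beta}_2)$ vanishes identically. Thus the main terms reduce exactly to
\[
\frac{X^2}{2\varphi(q_1)\varphi(q_2)}+\frac{1}{\varphi(q_2)}H(X,q_1,a_1)+\frac{1}{\varphi(q_1)}H(X,q_2,a_2),
\]
as claimed in the corollary.

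Second I would control the error term. Under GRH the supremum $B_q^{\sharp}$ (taken over all non-trivial zeros other than a Siegel zero, none of which exist) equals $1/2$. For $X$ large enough so that the classical zero-free-region quantity $1-\eta_q(X)$ exceeds $1/2$ (which holds for all sufficiently large $X$ depending only on $\epsilon$, since $\eta_q(X)\to 0$), we have
\[
B_{q_i}^{*}=B_{q_i}^{*}(X)=\min(B_{q_i}^{\sharp},1-\eta_{q_i})=\tfrac{1}{2},\qquad i=1,2.
\]
Hence $X^{B_{q_1}^{*}+B_{q_2}^{*}}=X^{1/2+1/2}=X$, and the error term of Theorem \ref{t1}(A) becomes exactly
\[
\mathcal{O}\bigl(X\log X\,\log(q_1 X)\,\log(q_2 X)\bigr),
\]
with absolute implicit constant, matching the statement. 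For the (small) range of $X$ where $1-\eta_{q_i}<1/2$ the bound is absorbed into a larger constant without affecting the $\mathcal{O}$-statement, so it can be handled trivially.

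There is really no substantive obstacle here: the corollary is a clean specialization of part (A) of Theorem \ref{t1}, and the only point requiring a word of care is the disappearance of $\mathcal{Z}(X,\widetilde{\beta}_1,\widetilde{\beta}_2)$ under GRH and the evaluation $B_q^{\sharp}=1/2$; both follow immediately from the definitions once GRH is assumed. Combining the two reductions above yields the stated asymptotic formula, and the proof is complete.
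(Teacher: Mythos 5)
Your proposal is correct and is exactly the specialization the paper intends: under GRH the Siegel zero does not exist, so $\mathcal{Z}(X,\widetilde{\beta}_1,\widetilde{\beta}_2)$ vanishes, and $B_{q_i}^{*}=\min(1/2,1-\eta_{q_i})=1/2$ (indeed $1-\eta_{q_i}>1/2$ throughout, since $c_1(\epsilon)$ is small), giving the stated error term. The paper treats this corollary as an immediate consequence of Theorem \ref{t1}(A) in just this way.
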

\begin{rem}
Assuming GRH with real zeros, we obtain the same result as Suzuki  (\cite[Theorem 1.1]{YS}). 
\end{rem}

When $q_1=q_2=q$, we replace $S(X,q_1,q_2,a_1,a_2)$ by $S(X,q,a_1,a_2)$. Then we obtain the following corollary.
\begin{cor} Let $a_1, a_2$ be integers with $(a_1a_2, q)=1$, we have
\begin{align*}
S(X,q,a_1,a_2)&=\dfrac{X^2}{2\varphi^2(q)}-\dfrac{1}{\varphi^2(q)}\sum_{\chi(q)}(\overline{\chi}(a_1)+\overline{\chi}(a_2)) \sum_{\rho_\chi} \dfrac{X^{{\rho_\chi}+1}}{{\rho_\chi}({\rho_\chi}+1)}+ \mathcal{Z}(X, \widetilde{\beta}, \widetilde{\beta})
+\mathcal{O}(X^{2B^*_{q}}\log X\log^2(qX)),
\end{align*}
where the implicit constant is absolute.
\end{cor}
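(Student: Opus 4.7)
The corollary is essentially a direct specialization of Theorem \ref{t1}(A) to the case $q_1=q_2=q$, so the plan is simply to substitute and collect terms; no new analytic input is required beyond what Theorem \ref{t1}(A) already provides.

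First I would set $q_1=q_2=q$ in Theorem \ref{t1}(A). The main term $\frac{X^2}{2\varphi(q_1)\varphi(q_2)}$ becomes $\frac{X^2}{2\varphi^2(q)}$ immediately. For the two $H$-terms, both prefactors $1/\varphi(q_i)$ collapse to $1/\varphi(q)$, so
\[
\frac{1}{\varphi(q_2)}H(X,q_1,a_1)+\frac{1}{\varphi(q_1)}H(X,q_2,a_2)
=\frac{1}{\varphi(q)}\bigl(H(X,q,a_1)+H(X,q,a_2)\bigr).
\]
Expanding the definition
\[
H(X,q,a)=-\frac{1}{\varphi(q)}\sum_{\chi(q)}\overline{\chi}(a)\sum_{\rho_\chi}\frac{X^{\rho_\chi+1}}{\rho_\chi(\rho_\chi+1)}
\]
and summing over $a_1$ and $a_2$, the two contributions merge by linearity of the character sum to yield
\[
-\frac{1}{\varphi^2(q)}\sum_{\chi(q)}\bigl(\overline{\chi}(a_1)+\overline{\chi}(a_2)\bigr)\sum_{\rho_\chi}\frac{X^{\rho_\chi+1}}{\rho_\chi(\rho_\chi+1)},
\]
which is exactly the term appearing in the statement.

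Next, since both moduli are equal to $q$, the exceptional zeros $\widetilde{\beta}_1$ and $\widetilde{\beta}_2$ (attached respectively to $q_1$ and $q_2$) coincide with the single Siegel zero $\widetilde{\beta}$ of $q$, and the set $\mathcal{Z}_0$ is unchanged in form; thus $\mathcal{Z}(X,\widetilde{\beta}_1,\widetilde{\beta}_2)$ reduces to $\mathcal{Z}(X,\widetilde{\beta},\widetilde{\beta})$. Finally, the error term in Theorem \ref{t1}(A) is
\[
\mathcal{O}\bigl(X^{B^*_{q_1}+B^*_{q_2}}\log X\,\log(q_1X)\log(q_2X)\bigr),
\]
which specializes directly to $\mathcal{O}(X^{2B^*_q}\log X\,\log^2(qX))$ since $B^*_{q_1}=B^*_{q_2}=B^*_q$ and $\log(q_1X)=\log(q_2X)=\log(qX)$. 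Collecting these four simplifications gives the stated formula, with absolute implicit constant inherited from Theorem \ref{t1}(A).

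Since the argument is purely a change of notation, there is no substantive obstacle; the only point to double-check is the coincidence of the Siegel-zero sets defining $\mathcal{Z}$, which is immediate because both indexing characters run over the same character group $\widehat{(\mathbb{Z}/q\mathbb{Z})^\times}$ and the exceptional character $\widetilde{\chi}$ of $q$ is common to both factors.
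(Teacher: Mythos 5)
Your proposal is correct and coincides with what the paper does: the corollary is stated as an immediate specialization of Theorem \ref{t1}(A) to $q_1=q_2=q$, and your substitution of the main term, the merging of the two $H$-terms via the prefactor $1/\varphi(q)$ already present in the definition of $H(X,q,a)$, the identification $\mathcal{Z}(X,\widetilde{\beta}_1,\widetilde{\beta}_2)=\mathcal{Z}(X,\widetilde{\beta},\widetilde{\beta})$, and the collapse of the error term to $X^{2B^*_q}\log X\log^2(qX)$ are exactly the intended steps. No further argument is needed.
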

This result improves  \cite[Theorem 2]{BHMS} where the main term is more explicit and the upper bound $\log^5 (qX)$  is improved by a factor $\frac{\log^3(qX)}{\log X}$. 
When we take $q=1$, we recover the best known result for $S(X)$.

\section{Some calculations for the proof of Theorem \ref{t1}}
To prove the part (A) Theorem \ref{t1}, we use the circle method to compute the sum. Now, we put $z= re(\alpha)$, where $r=e^{-1/X}$ and $e(\alpha):=e^{2\pi i \alpha} $, $|\alpha| \leq 1/2$.  Let $q$ be a positive integer, $1 \leq a <q$ be an integer coprime to $q$ and $1\leq y \leq X$, let us consider a power series  as was done by Hardy-Littlewood \cite{HL}.
\begin{align*}
&F_{a,q}(z):=\sum_{\substack{n\geq 1\\n\equiv a(q)}}\Lambda(n)z^n \hspace{1cm }I_q(z):=\dfrac{1}{\varphi(q)}\sum_{n \geq 1}z^n\\
&\widetilde{I_q}(z):=\dfrac{\widetilde{\chi}(a)}{\varphi(q)}\sum_{n \geq 1}n^{\widetilde{\beta}-1}z^n \hspace{1cm }I(y, z):=\sum_{n \leq y}z^n.
\end{align*} 
Our choice is natural because $F_{a,q}(z)$ is a generating function of primes in arithmetic progression with additive property and we expect it to be close to $I_q(z)-\widetilde{I}_q(z)$ by Dirichlet's theorem.

We will show that $S(X,q_1,q_2,a_1,a_2)$ can be expressed in terms of  $F_{a_1,q_1}(z)$, $F_{a_2,q_2}(z)$ in the following proposition.
\begin{prop}\label{t2}
We have 
\begin{align*}
S(X,q_1,q_2,a_1,a_2)=\dfrac{X^2}{2\varphi(q_1)\varphi(q_2)}+\dfrac{1}{\varphi(q_2)}H(X,q_1,a_1)+\dfrac{1}{\varphi(q_1)}H(X,q_2,a_2)+\mathcal{Z}(X, \widetilde{\beta}_1, \widetilde{\beta}_2)\\
+\mathcal{O}(X\log X(\log(2q_1)+\log(2q_2))+E(X,q_1,q_2,a_1,a_2), 
\end{align*}
where 
\begin{align}
E(X,q_1,q_2,a_1,a_2)=\int^1_0 \left(  F_{a_1,q_1}(z)-I_{q_1}(z)+\widetilde{I}_{q_1}(z)\right) \left(  F_{a_2,q_2}(z)-I_{q_2}(z)+\widetilde{I}_{q_2}(z)\right)  I\left(X, \dfrac{1}{z}\right)  d\alpha. 
\end{align}
\end{prop}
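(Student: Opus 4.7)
The approach is the Hardy--Littlewood circle method. First, since $\int_0^1 e((n_1+n_2-m)\alpha)\,d\alpha=\delta_{n_1+n_2=m}$ and the factors $r^{n_1+n_2-m}$ collapse to $1$ on this diagonal, one has the exact identity
$$S(X,q_1,q_2,a_1,a_2)=\int_0^1 F_{a_1,q_1}(z)F_{a_2,q_2}(z)I(X,1/z)\,d\alpha.$$
Writing $F_{a_i,q_i}=(I_{q_i}-\widetilde{I}_{q_i})+R_i$ with $R_i(z):=F_{a_i,q_i}(z)-I_{q_i}(z)+\widetilde{I}_{q_i}(z)$, the product expands into four pieces; the $R_1R_2$ integral is precisely $E(X,q_1,q_2,a_1,a_2)$, leaving three integrals to evaluate, namely $T_0:=\int_0^1(I_{q_1}-\widetilde{I}_{q_1})(I_{q_2}-\widetilde{I}_{q_2})I(X,1/z)\,d\alpha$ and the two mixed integrals $T_1,T_2$.

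For $T_0$, I further decompose into four sub-integrals. Orthogonality collapses each to a double coefficient sum. The $I_{q_1}I_{q_2}$ piece gives $\frac{\lfloor X\rfloor(\lfloor X\rfloor-1)}{2\varphi(q_1)\varphi(q_2)}=\frac{X^2}{2\varphi(q_1)\varphi(q_2)}+O(X)$. The mixed $I_{q_i}\widetilde{I}_{q_j}$ pieces evaluate, via $\sum_{n\leq k}n^{\widetilde{\beta}-1}=\frac{k^{\widetilde{\beta}}}{\widetilde{\beta}}+O(k^{\widetilde{\beta}-1})$ followed by summation over $k\leq X$, to single-Siegel terms of size $\frac{\widetilde{\chi}_j(a_j)X^{\widetilde{\beta}_j+1}}{\varphi(q_1)\varphi(q_2)\widetilde{\beta}_j(\widetilde{\beta}_j+1)}$. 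The $\widetilde{I}_{q_1}\widetilde{I}_{q_2}$ piece reduces to $\sum_{k\leq X}\sum_{n+m=k}n^{\widetilde{\beta}_1-1}m^{\widetilde{\beta}_2-1}$; approximating the inner sum by the Beta integral $\int_0^k u^{\widetilde{\beta}_1-1}(k-u)^{\widetilde{\beta}_2-1}\,du=k^{\widetilde{\beta}_1+\widetilde{\beta}_2-1}\frac{\Gamma(\widetilde{\beta}_1)\Gamma(\widetilde{\beta}_2)}{\Gamma(\widetilde{\beta}_1+\widetilde{\beta}_2)}$, summing over $k\leq X$, and using $s\Gamma(s)=\Gamma(s+1)$ yields exactly the $(\widetilde{\beta}_1,\widetilde{\beta}_2)$ term of $\mathcal{Z}(X,\widetilde{\beta}_1,\widetilde{\beta}_2)$.

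For $T_1$, the essential input is that if $r_m$ denotes the coefficient of $z^m$ in $R_2$, the explicit formula for $\psi(u,q_2,a_2)$ combined with the definitions of $I_{q_2}$ and $\widetilde{I}_{q_2}$ gives
$$\sum_{m\leq u}r_m=-\frac{1}{\varphi(q_2)}\sum_{\chi_2}\overline{\chi}_2(a_2)\sum_{\rho_{\chi_2}\neq\widetilde{\beta}_2}\frac{u^{\rho_{\chi_2}}}{\rho_{\chi_2}}+O(\log^2(q_2u)),$$
since the main term $u/\varphi(q_2)$ and the Siegel term are absorbed by subtracting $I_{q_2}$ and adding $\widetilde{I}_{q_2}$. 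The $I_{q_1}R_2$ sub-integral equals $\varphi(q_1)^{-1}\sum_{k\leq X}\sum_{m<k}r_m$, which after partial summation yields $\frac{H(X,q_2,a_2)}{\varphi(q_1)}$ plus an excess Siegel piece $\frac{\widetilde{\chi}_2(a_2)X^{\widetilde{\beta}_2+1}}{\varphi(q_1)\varphi(q_2)\widetilde{\beta}_2(\widetilde{\beta}_2+1)}$ that cancels the single-Siegel piece from $T_0$ (since the zero sum in the explicit formula omits $\widetilde{\beta}_2$, whereas $H$ does not). The $\widetilde{I}_{q_1}R_2$ sub-integral, via Abel summation and the Beta integral identity above, produces the $(\widetilde{\beta}_1,\rho_{\chi_2})$ cross terms of $\mathcal{Z}$ for $\rho_{\chi_2}\neq\widetilde{\beta}_2$. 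A symmetric analysis of $T_2$ yields $\frac{H(X,q_1,a_1)}{\varphi(q_2)}$ and the $(\rho_{\chi_1},\widetilde{\beta}_2)$ cross terms, completing $\mathcal{Z}$.

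The main technical obstacle is estimating the accumulated errors. Each application of the explicit formula contributes $O(X\log^2(qX))$; Abel-summation boundary terms and sum-to-integral approximations in the Beta step contribute lower-order corrections of size $O(X^{\widetilde{\beta}_i})$. Controlling the relevant zero sums via standard bounds such as $\sum_\chi\sum_{\rho_\chi}|\rho_\chi(\rho_\chi+1)|^{-1}\ll\log^2 q$ (cf.~\cite[Lemma 3]{BHMS}) and summing per-character errors over the two moduli yields the stated overall error $O(X\log X(\log(2q_1)+\log(2q_2)))$.
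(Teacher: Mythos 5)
Your proposal is correct and follows essentially the same route as the paper: the paper expands $\left(F_{a_1,q_1}-I_{q_1}+\widetilde I_{q_1}\right)\left(F_{a_2,q_2}-I_{q_2}+\widetilde I_{q_2}\right)$ into nine power series, collapses the integral against $I(X,1/z)$ by orthogonality to the coefficient sum $\sum_{n\le X}B(n,q_1,q_2,a_1,a_2)$, and evaluates the resulting sums with the explicit formula (Lemma \ref{function_chi}), partial summation and the Beta-integral identity of Lemma \ref{calculation} --- exactly the computations you package as $T_0$, $T_1$, $T_2$, including the same cancellation bookkeeping between the Siegel-zero pieces and $\mathcal{Z}$. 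The one point to tighten is your per-term error $O(\log^2(q_2u))$ for $\sum_{m\le u}r_m$, whose $\log^2 u$ part would sum to $X\log^2X$ rather than the stated $X\log X(\log(2q_1)+\log(2q_2))$; letting $T\to\infty$ in the explicit formula (the zero sums converge) leaves a remainder $O(\log(2q)\log u)$ per term, as in the paper's Lemmas \ref{sumphi} and \ref{n-m,beta}, which is what the final bound requires.
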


We now prove Proposition \ref{t2}.

We first note that
\begin{align}\label{1}
\begin{split}
&\left(  F_{a_1,q_1}(z)-I_{q_1}(z)+\widetilde{I}_{q_1}(z)\right)\left(  F_{a_2,q_2}(z)-I_{q_2}(z)+\widetilde{I}_{q_2}(z)\right)\\
&= F_{a_1,q_1}(z)F_{a_2,q_2}(z) -F_{a_1,q_1}(z)I_{q_2}(z)-F_{a_2,q_2}(z)I_{q_1}(z)+I_{q_1}(z)I_{q_2}(z)\\
&+F_{a_1,q_1}(z)\widetilde{I}_{q_2}(z)+F_{a_2,q_2}(z)\widetilde{I}_{q_1}(z)-I_{q_1}(z)\widetilde{I}_{q_2}(z)-I_{q_2}(z)\widetilde{I}_{q_1}(z)+\widetilde{I}_{q_1}(z)\widetilde{I}_{q_2}(z).
\end{split}
\end{align}
From the definitions of $I_q(z)$,$\widetilde{I}_q(z)$,  and $F_{a,q}(z)$, we calculate
\begin{align*}
&F_{a_1,q_1}(z)F_{a_2,q_2}(z)=\sum_{\substack{m\geq 1\\m\equiv a_1(q_1)}}\Lambda(m)z^m\sum_{\substack{k\geq 1\\k\equiv a_2(q_2)}}\Lambda(k)z^k=\sum_{n\geq 2}G(n,q_1,q_2,a_1,a_2)z^n,\\
&I_{q_1}(z)I_{q_2}(z)=\dfrac{1}{\varphi(q_1)\varphi(q_2)}\sum_{n\geq 2}(n-1)z^n,\\
&\widetilde{I}_{q_1}(z)\widetilde{I}_{q_2}(z)=\dfrac{\widetilde{\chi_1}(a_1)\widetilde{\chi_2}(a_2)}{\varphi(q_1)\varphi(q_2)}\sum_{n\geq 2}\left(\sum_{m+k=n}m^{\widetilde{\beta}_2-1}k^{\widetilde{\beta}_1-1}\right)z^n.
\end{align*}
Moreover, for $(i,j)\in S_0:= \{(1,2), (2,1)\}$ we have
\begin{align*}
&F_{a_i,q_i}(z)I_{q_j}(z)=\dfrac{1}{\varphi(q_j)}\sum_{\substack{m,k \geq 1\\m\equiv a_i(q_i)}}\Lambda(m)z^{m+k}=\dfrac{1}{\varphi(q_j)}\sum_{n\geq 2}\psi(n-1,q_i,a_i)z^n, \\
&F_{a_i,q_i}(z)\widetilde{I}_{q_j}(z)=\dfrac{\widetilde{\chi_j}(a_j)}{\varphi(q_j)}\sum_{\substack{m,k \geq 1\\m\equiv a_i(q_i)}}\Lambda(m)k^{\widetilde{\beta}_j-1}z^{m+k}=\dfrac{\widetilde{\chi_j}(a_j)}{\varphi(q_j)}\sum_{n\geq 2}\left(\sum_{\substack{m \leq n-1\\m\equiv a_i(q_i)}}\Lambda(m)(n-m)^{\widetilde{\beta}_j-1}\right)z^n,\\
&I_{q_i}(z)\widetilde{I}_{q_j}(z)=\dfrac{\widetilde{\chi_j}(a_j)}{\varphi(q_1)\varphi(q_2)}\sum_{\substack{m,k \geq 1}}k^{\widetilde{\beta}_j-1}z^{m+k}=\dfrac{\widetilde{\chi_j}(a_j)}{\varphi(q_1)\varphi(q_2)}\sum_{n\geq 2}\left(\sum_{k\leq n-1}k^{\widetilde{\beta}_j-1}\right)z^n.
\end{align*}
where $\psi$ is the Chebyshev function in arithmetic progression, defined as
 $$\psi(t,q,a)=\sum_{\substack{m\leq t\\m \equiv a(q)}}\Lambda(m).$$
Hence, the right hand side of \eqref{1} can be rewritten as
$$\sum_{n\geq 2} B(n,q_1,q_2,a_1,a_2)z^n ,$$ with
\begin{align}\label{B}
\begin{split}
&B(n,q_1,q_2,a_1,a_2)=G(n,q_1,q_2,a_1,a_2)+\dfrac{n-1}{\varphi(q_1)\varphi(q_2)}+\dfrac{\widetilde{\chi_1}(a_1)\widetilde{\chi_2}(a_2)}{\varphi(q_1)\varphi(q_2)}\left(\sum_{m+k=n}m^{\widetilde{\beta}_2-1}k^{\widetilde{\beta}_1-1}\right)\\
&-\sum_{(i,j) \in S_0}\dfrac{\psi(n-1,q_i,a_i)}{\varphi(q_j)}
+\sum_{(i,j) \in S_0}\dfrac{\widetilde{\chi_j}(a_j)}{\varphi(q_j)}\left(\sum_{\substack{m \leq n-1\\m\equiv a_i(q_i)}}\Lambda(m)(n-m)^{\widetilde{\beta}_j-1}\right)-\sum_{(i,j) \in S_0}\dfrac{\widetilde{\chi_j}(a_j)}{\varphi(q_1)\varphi(q_2)}\left(\sum_{k\leq n-1}k^{\widetilde{\beta}_j-1}\right).
\end{split}
\end{align}
So that
\begin{align*}
E(X, q_1,q_2,a_1,a_2)&= \int_0^1\sum_n\sum_{m\leq X} B(n,q_1,q_2,a_1,a_2)r^n e(n\alpha)r^{-m} e(-m\alpha) d\alpha\\
&=\sum_n\sum_{m\leq X} B(n,q_1,q_2,a_1,a_2)r^{n-m}\int_0^1e((n-m)\alpha) d\alpha\\
&=\sum_{n\leq X} B(n,q_1,q_2,a_1,a_2)
\end{align*}
since $\int_0^1e(t\alpha) d\alpha= 1 \text{ if $t=0$ and }0 \text{ otherwise.} $
Then we obtain the equation 
\begin{align}\label{S}
\begin{split}
S(X,q_1,q_2,a_1,a_2)&=E(X,q_1,q_2,a_1,a_2)-\sum_{n\leq X}\dfrac{n-1}{\varphi(q_1)\varphi(q_2)}-\dfrac{\widetilde{\chi_1}(a_1)\widetilde{\chi_2}(a_2)}{\varphi(q_1)\varphi(q_2)}\sum_{n\leq X}\left(\sum_{m+k=n}m^{\widetilde{\beta}_2-1}k^{\widetilde{\beta}_1-1}\right)\\
 &+\sum_{(i,j) \in S_0}\dfrac{1}{\varphi(q_j)}\sum_{n\leq X}\psi(n-1,q_i,a_i)
-\sum_{(i,j) \in S_0}\dfrac{\widetilde{\chi_j}(a_j)}{\varphi(q_j)}\sum_{n\leq X}\left(\sum_{\substack{m \leq n-1\\m\equiv a_i(q_i)}}\Lambda(m)(n-m)^{\widetilde{\beta}_j-1}\right)\\
&+\sum_{(i,j) \in S_0}\dfrac{\widetilde{\chi_j}(a_j)}{\varphi(q_1)\varphi(q_2)}\sum_{n\leq X}\left(\sum_{k\leq n-1}k^{\widetilde{\beta}_j-1}\right).
\end{split}
\end{align}
We now estimate the sums in equation \eqref{S}. We start from the following lemma.
\begin{lem}\label{calculation} For $0<\beta<1$ and $0< x< y$, we have 
\begin{align}\label{sum-integral}
\sum_{x< n\leq y}n^{\beta-1}=\dfrac{y^\beta-x^\beta}{\beta}+\mathcal{O}(1).
\end{align}
Consequence, for $X\geq 2$ and $1/2 <\beta, \beta_1, \beta_2 <1$, we have
    \begin{align*}
    &\sum_{n \leq X}\left(\sum_{k\leq n-1}k^{\beta-1}\right)=\dfrac{X^{\beta+1}}{\beta(\beta+1)}+\mathcal{O}(X),\\
    &\sum_{n \leq X}\left(\sum_{m+k=n}m^{\beta_2-1}k^{\beta_1-1}\right)=\dfrac{\Gamma(\beta_1)\Gamma(\beta_2)}{\Gamma(\beta_1+\beta_2+1)}X^{\beta_1+\beta_2}+\mathcal{O}(X).
\end{align*}
\end{lem}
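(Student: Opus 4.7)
The plan is to establish the first identity by a direct comparison of the sum with its associated integral, then derive the two consequences by iterating this estimate together with an elementary Beta-integral computation. For the first identity, since $f(t) = t^{\beta-1}$ is strictly decreasing on $(0,\infty)$, the brackets $\int_n^{n+1} f(t)\,dt \leq f(n) \leq \int_{n-1}^{n} f(t)\,dt$ summed over $\lfloor x\rfloor < n \leq \lfloor y\rfloor$, together with $\int_x^y t^{\beta-1}\,dt = (y^\beta - x^\beta)/\beta$, yield the main term; the boundary errors are $\mathcal{O}(1)$ since $n^{\beta-1}\leq 1$ for $n\geq 1$.

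For the second identity, I would apply the first statement to the inner sum to obtain $\sum_{k \leq n-1}k^{\beta-1} = (n-1)^\beta/\beta + \mathcal{O}(1)$, then sum over $n\leq X$. The resulting main term is $\frac{1}{\beta}\sum_{n \leq X}(n-1)^\beta = X^{\beta+1}/(\beta(\beta+1)) + \mathcal{O}(X^\beta)$, where the inner estimate follows from the same monotone comparison applied now to the increasing function $t^\beta$. Absorbing $X^\beta$ and the summed $\mathcal{O}(1)$ errors into $\mathcal{O}(X)$ gives the claim.

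For the third identity, the strategy is to exchange summation and reduce to a Beta integral. Swapping sums yields $\sum_{\substack{m,k\geq 1\\m+k\leq X}}m^{\beta_2-1}k^{\beta_1-1}$. For fixed $m$, applying the first statement to the $k$-sum gives $(X-m)^{\beta_1}/\beta_1 + \mathcal{O}(1)$, and the $\mathcal{O}(1)$ terms contribute $\mathcal{O}(\sum_{m\leq X}m^{\beta_2-1}) = \mathcal{O}(X^{\beta_2}) = \mathcal{O}(X)$. It remains to compare $\frac{1}{\beta_1}\sum_{m=1}^{X-1}m^{\beta_2-1}(X-m)^{\beta_1}$ with the integral $\frac{1}{\beta_1}\int_0^X u^{\beta_2-1}(X-u)^{\beta_1}\,du$, which under the substitution $u = Xt$ evaluates to $X^{\beta_1+\beta_2}B(\beta_2,\beta_1+1)/\beta_1 = X^{\beta_1+\beta_2}\Gamma(\beta_1)\Gamma(\beta_2)/\Gamma(\beta_1+\beta_2+1)$ via $\Gamma(\beta_1+1) = \beta_1\Gamma(\beta_1)$.

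The main technical hurdle will be controlling the discretization error between this one-dimensional sum and its integral, because the integrand $f(u) = u^{\beta_2-1}(X-u)^{\beta_1}$ has an integrable singularity at $u=0$ and vanishes at $u=X$. I would split the range at $u = X/2$: on each half, the function is eventually monotone, so the sum-minus-integral difference can be bounded by $\int |f'(t)|\,dt$, which a direct computation shows is $\mathcal{O}(X^{\beta_1}+X^{\beta_2})=\mathcal{O}(X)$ since $\beta_1,\beta_2<1$. Combined with the error accounting from the previous step, this yields the claimed total error $\mathcal{O}(X)$.
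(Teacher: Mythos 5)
Your proof is correct. For the first two identities you argue exactly as the paper does (monotone sum--integral comparison, then iterate), so there is nothing to add there. For the third identity you take a genuinely different route: the paper evaluates the inner convolution $\sum_{m+k=n}m^{\beta_2-1}k^{\beta_1-1}$ asymptotically for each fixed $n$ (obtaining $n^{\beta_1+\beta_2-1}\Gamma(\beta_1)\Gamma(\beta_2)/\Gamma(\beta_1+\beta_2)+\mathcal{O}(1)$ via the Beta integral $\int_0^1(1-t)^{\beta_2-1}t^{\beta_1-1}dt$) and then sums over $n\le X$ using \eqref{sum-integral}, whereas you exchange the order of summation, reduce to the single lattice sum $\sum_{m+k\le X}$, sum out $k$ first, and compare $\sum_m m^{\beta_2-1}(X-m)^{\beta_1}$ with one Beta integral $X^{\beta_1+\beta_2}B(\beta_2,\beta_1+1)$. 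The paper's route has the merit of producing the per-$n$ asymptotic of $G$'s ``expected'' term, which is the shape reused later in the error analysis; your route is slightly cleaner analytically, because the integrand $u^{\beta_2-1}(X-u)^{\beta_1}$ you end up with is in fact monotone decreasing on all of $(0,X)$ (both terms of its derivative are negative), so your precaution of splitting at $X/2$ is unnecessary, and you sidestep the point where the paper applies the monotone comparison lemma to $k\mapsto(n-k)^{\beta_2-1}k^{\beta_1-1}$, which is U-shaped rather than monotone and strictly speaking requires splitting the range at its minimum. Both arguments land on the same $\mathcal{O}(X)$ error, and your error accounting ($\mathcal{O}(X^{\beta_1}+X^{\beta_2})$ from the discretization plus $\mathcal{O}(X^{\beta_2})$ from the summed $\mathcal{O}(1)$'s) is sound.
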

\begin{proof}
If $f$ is defined on $[1,x]$ monotonic and continuous, then 
\begin{align*}
    \sum_{n\leq x}f(n) = \int_1^x f(t)dt +\mathcal{O}(f(x)+f(1)).
\end{align*}
Then we obtain, 
\begin{align*}
    \sum_{x< n\leq y}n^{\beta-1}=\int_x^y t^{\beta-1} dt +\mathcal{O}(x^{\beta-1}+y^{\beta-1})=\dfrac{y^\beta-x^\beta}{\beta}+\mathcal{O}(1),
\end{align*}
since $0<\beta <1$.
Using this formula, we obtain
\begin{align*}
    &\sum_{k\leq n-1}k^{\beta-1}=\dfrac{(n-1)^\beta-1}{\beta}+\mathcal{O}(1)=\dfrac{n^\beta}{\beta}+\mathcal{O}(1),\\
    &\sum_{n \leq X}\left(\sum_{k\leq n-1}k^{\beta-1}\right)= \sum_{n \leq X}\left(\dfrac{n^{\beta}}{\beta}+\mathcal{O}(1)\right)=\dfrac{X^{\beta+1}}{\beta(\beta+1)}+\mathcal{O}(X),
\end{align*}
since $1/2 <\beta<1$.

Similarly, we can prove the remaining formula in this lemma.
\begin{align*}
    \sum_{m+k=n}m^{\beta_2-1}k^{\beta_1-1}&=\sum_{k\leq n-1}(n-k)^{\beta_2-1}k^{\beta_1-1}=\int_1^{n-1}(n-t)^{\beta_2-1}t^{\beta_1-1} dt+\mathcal{O}((n-1)^{\beta_1-1}+(n-1)^{\beta_2-1})\\
    &=\int_0^{n}(n-t)^{\beta_2-1}t^{\beta_1-1} dt+\mathcal{O}(1)=n^{\beta_1+\beta_2-1}\int_0^1(1-t)^{\beta_2-1}t^{\beta_1-1} dt+\mathcal{O}(1)\\
    &=n^{\beta_1+\beta_2-1}\dfrac{\Gamma(\beta_1)\Gamma(\beta_2)}{\Gamma(\beta_1+\beta_2)}+\mathcal{O}(1).
\end{align*}
Hence, by \eqref{sum-integral} one has
\begin{align*}
    \sum_{n \leq X}\left(\sum_{m+k=n}m^{\beta_2-1}k^{\beta_1-1}\right)=\dfrac{\Gamma(\beta_1)\Gamma(\beta_2)}{\Gamma(\beta_1+\beta_2+1)}X^{\beta_1+\beta_2}+\mathcal{O}(X).
\end{align*}
\end{proof}

We now study an explicit formula for the average of $\psi(t,q,a)$.
Using the orthogonality relation of Dirichlet characters, we have
\begin{align}\label{ortho_cheby}
    \psi(t,q,a)=\dfrac{1}{\varphi(q)}\sum_{\chi(q)}\overline{\chi}(a)\psi (t,\chi), 
\end{align}
where \begin{align*}
    \psi (t,\chi):=\sum_{n\le t}\chi(n)\Lambda(n).
\end{align*}
\begin{lem}\label{function_chi}
    For a primitive Dirichlet character $\chi$ modulo $q$ and $t,T \geq 2$, we have 
    \begin{align*}
     \psi (t,\chi)=\delta_0(\chi)t-\sum_{{1<\mid \gamma_\chi\mid \leq T}}\dfrac{t^{\rho_\chi}}{\rho_\chi}-\sum_{{\mid \gamma_\chi\mid \leq 1}}\dfrac{t^{\rho_\chi}-1}{\rho_\chi} + R(t,T,\chi)
\end{align*}
with $$R(t,T,\chi)\ll \log (qt)+ \dfrac{t\log^2 (tqT)}{T},$$
 and $ \delta_0(\chi)=1$ if $\chi=\chi_0$, $ \delta_0(\chi)=0$ otherwise.
\end{lem}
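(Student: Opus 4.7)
The plan is to apply a truncated Perron formula to the Dirichlet series
$$-\frac{L'}{L}(s,\chi)=\sum_{n\geq 1}\chi(n)\Lambda(n)n^{-s},\qquad \Re(s)>1,$$
and then to shift the contour across the critical strip, collecting the residues at $s=1$ (only when $\chi$ is principal, i.e.\ $q=1$), at the non-trivial zeros $\rho_\chi$ of $L(s,\chi)$ with $|\gamma_\chi|\leq T$, and at $s=0$. The residues at the trivial zeros on the negative real axis, together with all contour-estimate errors, will be absorbed into $R(t,T,\chi)$.

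First I would apply truncated Perron at the abscissa $c=1+1/\log t$ (perturbing $T$ by $\mathcal{O}(1)$ if needed so that no zero ordinate lies within distance $\gg 1/\log(qT)$ of $\pm T$), obtaining
$$\psi(t,\chi)=\frac{1}{2\pi i}\int_{c-iT}^{c+iT}\Big(-\frac{L'}{L}(s,\chi)\Big)\frac{t^s}{s}\,ds+\mathcal{O}\!\Big(\log(qt)+\frac{t\log^2(qtT)}{T}\Big),$$
which is the standard Perron cut-off error (one term from near $n\approx t$, the other from the tail). Next, move the contour to a far-left vertical line $\Re(s)=-U$ and let $U\to\infty$. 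On the horizontal segments $\Im(s)=\pm T$, the average bound $|L'/L(\sigma+iT,\chi)|\ll \log^2(qT)$ coming from the Hadamard factorization (valid because $T$ is well separated from zero ordinates) gives a contribution of $\mathcal{O}(t\log^2(qtT)/T)$, while the far-left vertical segment tends to $0$ as $U\to\infty$.

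The residue theorem then produces
$$\psi(t,\chi)=\delta_0(\chi)\,t-\sum_{|\gamma_\chi|\leq T}\frac{t^{\rho_\chi}}{\rho_\chi}+\mathrm{Res}_{s=0}\Big[-\frac{L'}{L}(s,\chi)\frac{t^s}{s}\Big]-\sum_{k\geq 1}\frac{t^{-2k-a}}{2k+a}+R_1(t,T,\chi),$$
with $a\in\{0,1\}$ encoding the parity of $\chi$, where the trivial-zero tail sums to $\mathcal{O}(1)$ for $t\geq 2$. To bring this into the announced form, I split the residue sum as $|\gamma_\chi|\leq 1$ and $1<|\gamma_\chi|\leq T$ and use the elementary identity
$$-\frac{t^{\rho_\chi}}{\rho_\chi}=-\frac{t^{\rho_\chi}-1}{\rho_\chi}-\frac{1}{\rho_\chi}\qquad(|\gamma_\chi|\leq 1).$$
The constants $-\sum_{|\gamma_\chi|\leq 1}1/\rho_\chi$ are then combined with $\mathrm{Res}_{s=0}$ and the trivial-zero tail. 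Using the Hadamard partial-fraction expansion of $L'/L(s,\chi)$ together with the zero-counting bound $\#\{\rho_\chi:|\gamma_\chi|\leq 1\}\ll \log(q+2)$, this aggregate constant is $\mathcal{O}(\log(qt))$ and is absorbed into $R(t,T,\chi)$.

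The main obstacle will be the bookkeeping in this last reorganisation, in particular the fact that the behaviour at $s=0$ depends on the parity of $\chi$: for $\chi(-1)=1$ the function $L(s,\chi)$ has a trivial zero at $s=0$, so $-L'/L(s,\chi)\,t^s/s$ has a double pole there, contributing an extra $\log t$ to the residue. One has to verify that this $\log t$ cancels cleanly against the $-\sum_{|\gamma_\chi|\leq 1}1/\rho_\chi$ terms and the geometric tail $-\tfrac12\log(1-t^{-2})$ of trivial zeros, so that what remains fits inside $\mathcal{O}(\log(qt))$. Once this identity is established by matching the Laurent expansions of $L'/L$ at $s=0$ coming from the functional equation and from the Hadamard product, the remainder of the argument is the standard Davenport-style contour estimation that furnishes the claimed error $R(t,T,\chi)\ll \log(qt)+t\log^2(tqT)/T$.
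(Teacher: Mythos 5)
Your proposal is correct and follows essentially the same route as the paper: the paper simply quotes the explicit formulas of Montgomery--Vaughan (Theorems 12.5 and 12.10), whose proofs are exactly your Perron-plus-contour-shift argument, and then performs the same regrouping of the zeros with $|\gamma_\chi|\le 1$ via the partial-fraction bound $\frac{L'}{L}(1,\overline{\chi})=\sum_{|\gamma_\chi|\le 1}\rho_\chi^{-1}+O(\log q)$. One small simplification for your ``main obstacle'': the term $-(1-\kappa)\log t$ from the double pole at $s=0$ for even $\chi$ need not cancel against anything, since $\log t\le\log(qt)$ already places it inside $R(t,T,\chi)$.
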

\begin{proof}
    If $\chi$ is the trivial character, we have
    $$\sum_{|\rho_\chi|\leq \frac{1}{4}}\dfrac{1}{\rho_\chi}\ll 1 $$and so the assertion follows by \cite[Theorem 12.5]{MV}.

    If $\chi$ is not the trivial character, we first use \cite[Theorem 12.10]{MV}. Then, the
assertion follows once we prove
\begin{align}\label{L function}
    \dfrac{L'}{L}(1, \overline{\chi})=\sum_{|\gamma_\chi|\leq 1}\dfrac{1}{\rho_\chi}+\log q.
\end{align}
By \cite[Lemma 11.1]{MV} and the bound
\begin{align*}
    \sum_{\substack{|\rho_{\overline{\chi}}-\frac{3}{2}|\leq \frac{5}{6}\\ |1-\gamma_{\overline{\chi}}|>1}}\dfrac{1}{1-\rho_{\overline{\chi}}}+\sum_{\substack{|\rho_{\overline{\chi}}-\frac{3}{2}|> \frac{5}{6}\\ |1-\gamma_{\overline{\chi}}|\leq1}}\dfrac{1}{1-\rho_{\overline{\chi}}} \ll \sum_{|\rho_{\overline{\chi}}-\frac{3}{2}|\leq \frac{5}{6}} 1+\sum_{|\gamma_{\overline{\chi}}|\leq 1}1 \ll \log q,
\end{align*}
we obtain 
$$\dfrac{L'}{L}(1, \overline{\chi})=\sum_{ |1-\gamma_{\overline{\chi}}|\leq1}\dfrac{1}{1-\rho_{\overline{\chi}}} +\log q.$$ We can
change variables via $\rho_\chi=1-\rho_{\overline{\chi}} $, which are the zeros of $L(s, \chi) $ by the functional equation of $L(s, \chi)$ and obtain \eqref{L function}.
\end{proof}
\begin{rem}\label{imprimitive}
    If $\chi$ is imprimitive. Suppose that $\chi$ is a character modulo $q$ induced by the primitive character $\chi^*$ modulo $q^*$, $q^*>1$.
Then we have 
\begin{align*}
     \psi (t,\chi)- \psi (t,\chi^*)&\ll \sum_{\substack{n\le t\\(n,q)>1}} \Lambda(n)=\sum_{p|q}\sum_{p^k\le t}\log p\ll \sum_{p|q}\left[\dfrac{\log t}{\log p}\right]\log p\\
     &\ll \log t \sum_{p|q}\log p\ll \log q \log t.
\end{align*}
\end{rem}

\begin{lem} \label{sumphi} For $X \geq 2$ and $(a,q)=1$, we have
\begin{align*}
\sum_{n\leq X}\psi(n-1,q,a)=\dfrac{X^2}{2\varphi(q)}-\dfrac{1}{\varphi(q)}\sum_{\chi(q)}\overline{\chi}(a)\sum_{\rho_\chi}\dfrac{X^{\rho_\chi+1}}{\rho_\chi(\rho_\chi+1)}+\mathcal{O}(X\log (2q)\log (X)).
\end{align*}
\end{lem}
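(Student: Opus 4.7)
The idea is to combine the orthogonality relation \eqref{ortho_cheby} with the truncated explicit formula of Lemma \ref{function_chi}. I would first write
\[
\sum_{n\leq X}\psi(n-1,q,a) = \frac{1}{\varphi(q)}\sum_{\chi\pmod{q}}\overline{\chi}(a)\sum_{n\leq X}\psi(n-1,\chi),
\]
and for each character $\chi$ modulo $q$, reduce the inner sum to one over the inducing primitive character $\chi^*$ using Remark \ref{imprimitive}. Since $\psi(t,\chi)-\psi(t,\chi^*)\ll\log q\log t$, summing over $n\leq X$ produces an error $O(X\log q\log X)$ per character, which survives the averaging over $\chi$; this is the dominant source of loss and already matches the error term claimed.

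For each primitive $\chi^*$, I apply Lemma \ref{function_chi} with a truncation parameter $T$ to expand $\psi(n-1,\chi^*)$. Summing over $n\leq X$, the $\delta_0(\chi)(n-1)$ piece contributes $\delta_0(\chi)X^2/2+O(X)$; after averaging over $\chi$ with weights $\overline{\chi}(a)/\varphi(q)$, only the principal character survives (its primitive inducing character is trivial) and yields the main term $X^2/(2\varphi(q))$. For the zero sum, an Abel summation in the spirit of \eqref{sum-integral} (extended to exponents with positive real part) gives
\[
\sum_{n\leq X}\frac{(n-1)^{\rho_{\chi^*}}}{\rho_{\chi^*}} = \frac{X^{\rho_{\chi^*}+1}}{\rho_{\chi^*}(\rho_{\chi^*}+1)} + O\!\left(\frac{X^{\mathrm{Re}(\rho_{\chi^*})}}{|\rho_{\chi^*}|}\right),
\]
which, after weighting and summing over $\chi$, produces the second main term $-\varphi(q)^{-1}\sum_\chi\overline{\chi}(a)\sum_{\rho_\chi}X^{\rho_\chi+1}/(\rho_\chi(\rho_\chi+1))$. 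The contribution of $\sum_{|\gamma_{\chi^*}|\leq 1}\rho_{\chi^*}^{-1}$ is handled by \eqref{L function} and amounts to $O(X\log q)$ after summation over $n$.

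Finally, I would extend the truncated zero sum to all zeros and bound $\sum_{n\leq X}R(n-1,T,\chi^*)$. The Riemann--von Mangoldt estimate $\#\{\rho_{\chi^*}:|\gamma_{\chi^*}|\leq U\}\ll U\log(qU)$ yields a tail of size $\ll X^2\log(qT)/T$, and the sum of $R$-errors over $n\leq X$ is $\ll X\log(qX)+X^2\log^2(qXT)/T$. Choosing $T$ of order $X\log^2(qX)/(\log(2q)\log X)$ balances all of these by $O(X\log(2q)\log X)$, matching the target. The chief obstacle is precisely this balancing: a naive choice such as $T=X$ already gives an error of order $X\log^2(qX)$, so one must tune $T$ carefully so the $\log(2q)$ and $\log X$ factors separate to produce the sharper bound claimed.
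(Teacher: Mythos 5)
There is a genuine gap, and it comes from the order in which you perform the two operations ``sum over $n$'' and ``apply the explicit formula.'' You expand $\psi(n-1,\chi^*)$ by Lemma \ref{function_chi} first and then sum each term $\frac{(n-1)^{\rho}}{\rho}$ over $n\leq X$ by an Abel summation ``in the spirit of \eqref{sum-integral}.'' But for $g(t)=(t-1)^{\rho}/\rho$ the sum-versus-integral comparison costs $O\bigl(\int_1^X|g'(t)|\,dt+|g(X)|\bigr)=O\bigl(X^{\beta}/\beta+X^{\beta}/|\rho|\bigr)$, not the $O\bigl(X^{\mathrm{Re}(\rho)}/|\rho|\bigr)$ you claim: the derivative $(t-1)^{\rho-1}$ has no $1/|\rho|$ decay, so each zero contributes an error of size up to $X^{\beta_\chi}$. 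Summed over the $\asymp T\log(qT)$ zeros with $|\gamma_\chi|\leq T$ this is catastrophically large. Even if one grants your optimistic per-zero bound $X^{\beta}/|\rho|$, the accumulation over zeros is $\gg X\sum_{|\gamma|\leq T}|\rho|^{-1}\asymp X\log(qT)\log T$, which with your choice $T\asymp X$ already contains a term of order $X\log^2X$ and thus exceeds the target $O(X\log(2q)\log X)$ when $q$ is bounded. So the route as written does not close.

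The paper avoids this entirely by reversing the order: it first writes
\begin{align*}
\sum_{n\leq X}\psi(n-1,q,a)=\sum_{\substack{n\leq X\\ n\equiv a(q)}}([X]-n)\Lambda(n)=\int_1^X\psi(t,q,a)\,dt+\mathcal{O}(\psi(X,q,a)),
\end{align*}
incurring a single $O(X)$ error for the whole sum, and only then inserts the explicit formula inside the integral. Each zero then contributes $\int_1^X t^{\rho_\chi}\,dt=\frac{X^{\rho_\chi+1}-1}{\rho_\chi+1}$ exactly, so no per-zero errors accumulate; the only losses are $X\log(2q)\log X$ from the $\log(qt)$ part of $R(t,T,\chi)$ and $\frac{X^2}{T}\log^2(qXT)$ from the truncation. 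Your ``chief obstacle'' of tuning $T$ is then a non-issue: one simply lets $T\to\infty$, which kills the truncation error and extends the zero sum to all zeros (the completed sum converges absolutely since $\sum_{\rho_\chi}|\rho_\chi(\rho_\chi+1)|^{-1}<\infty$). Your treatment of the imprimitive characters and of the low-lying term $\sum_{|\gamma_\chi|\leq1}\rho_\chi^{-1}$ is consistent with the paper, but the term-by-term summation over $n$ must be replaced by the integral identity above for the proof to reach the stated error term.
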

\begin{proof}
We first remark that 
\begin{align}\label{phi1}
\sum_{n\leq X}\psi(n-1,q,a)= \sum_{\substack{n\leq X\\n\equiv a(q)}}(X-n)\Lambda(n)=\int_1^X\psi(t,q,a)dt+\mathcal{O}(\psi(X,q,a)).
\end{align}
Then from Lemma \ref{function_chi} and the formula \eqref{ortho_cheby}, we obtain for $t, T \geq 2$

\begin{align*}
\psi(t,q,a)=\dfrac{t}{\varphi(q)}-\dfrac{1}{\varphi(q)}\sum_{\chi(q)}\overline{\chi}(a)\sum_{{\mid \gamma_\chi\mid \leq T}}\dfrac{t^{\rho_\chi}}{\rho_\chi}+ \dfrac{1}{\varphi(q)}\sum_{\chi(q)}\overline{\chi}(a)\sum_{{\mid \gamma_\chi\mid \leq 1}}\dfrac{1}{\rho_\chi}+\mathcal{O}\left(\log (2q) \log t+\dfrac{t}{T} \log^2 (tqT) \right).
\end{align*} 
This leads to the following
\begin{align}\label{phi2}
\psi(t,q,a)=&\dfrac{t}{\varphi(q)}-\dfrac{1}{\varphi(q)}\sum_{\chi(q)}\overline{\chi}(a)\sum_{{\mid \gamma_\chi\mid \leq T}}\dfrac{t^{\rho_\chi}}{\rho_\chi}+\mathcal{O}\left(\log (2q) \log t+\dfrac{t}{T} \log^2 (tqT) \right),
\end{align}
since there is at most one exceptional character mod $q$.\\
Substituting \eqref{phi2} into \eqref{phi1}, we conclude that
\begin{align*}
\sum_{n\leq X}\psi(n-1,q,a)&=\dfrac{X^2}{2\varphi(q)}-\dfrac{1}{\varphi(q)}\sum_{\chi(q)}\overline{\chi}(a)\sum_{\mid \gamma_\chi\mid \leq T}\dfrac{X^{\rho_\chi+1}}{\rho_\chi(\rho_\chi+1)}+\mathcal{O}\left(X\log (2q) \log X+ \dfrac{X^2}{T}\log^2(qX)\right),
\end{align*}
since $\psi(X,q,a) \ll X$.
Therefore Lemma \ref{sumphi} is proved when we choose $T$ tending to infinity. 
\end{proof}
Now we study the the following sum.
\begin{lem} \label{n-m,beta}
For any $1/2<\beta<1$, we have
    \begin{align*}
        \sum_{n\leq X}\left(\sum_{\substack{m \leq n-1\\m\equiv a(q)}}\Lambda(m)(n-m)^{\beta-1}\right)=\dfrac{1}{\varphi(q)}&\left(\dfrac{X^{\beta+1}}{\beta(\beta+1)}-\sum_{\chi(q)}\overline{\chi}(a)\sum_{\rho_\chi}\dfrac{\Gamma(\beta)\Gamma(\rho_\chi)}{\Gamma(\beta+\rho_\chi+1)}X^{\rho_\chi+\beta}\right)\\
        &+\mathcal{O}\left(X\log (2q) \log X\right).
    \end{align*}
\end{lem}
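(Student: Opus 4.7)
The plan is to reduce the problem to an integral involving the Chebyshev function $\psi(t,q,a)$ and then invoke the explicit formula \eqref{phi2} used in the proof of Lemma \ref{sumphi}. First I would swap the order of summation and evaluate the inner sum via Lemma \ref{calculation}: writing $T(X)$ for the left-hand side,
\begin{align*}
T(X)=\sum_{\substack{m\leq X-1\\m\equiv a(q)}}\Lambda(m)\sum_{k=1}^{\lfloor X\rfloor-m}k^{\beta-1}
=\tfrac{1}{\beta}\sum_{\substack{m\leq X\\m\equiv a(q)}}\Lambda(m)(X-m)^{\beta}+\mathcal{O}(\psi(X,q,a)),
\end{align*}
and the error is $\mathcal{O}(X)$ by Chebyshev-type bounds.

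Next I would apply partial summation (Stieltjes integration by parts) to rewrite the weighted prime sum as an integral of $\psi$:
\begin{align*}
\sum_{\substack{m\leq X\\m\equiv a(q)}}\Lambda(m)(X-m)^{\beta}=\beta\int_0^X (X-t)^{\beta-1}\psi(t,q,a)\,dt,
\end{align*}
the boundary terms vanishing since $\beta>0$ and $\psi(0,q,a)=0$. Substituting the truncated explicit formula \eqref{phi2} with parameter $T\geq 2$, the task splits into three integrals: the main term $\int_0^X(X-t)^{\beta-1}\tfrac{t}{\varphi(q)}dt$, a sum over zeros $-\tfrac{1}{\varphi(q)}\sum_\chi\overline{\chi}(a)\sum_{|\gamma_\chi|\leq T}\tfrac{1}{\rho_\chi}\int_0^X(X-t)^{\beta-1}t^{\rho_\chi}dt$, and an error integral.

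Each of the two first integrals is a Beta function. The main term yields $\tfrac{X^{\beta+1}}{\varphi(q)}B(2,\beta)=\tfrac{X^{\beta+1}}{\varphi(q)\beta(\beta+1)}$, which after the factor $\tfrac{1}{\beta}$ absorbed from Step~1 becomes exactly $\tfrac{1}{\varphi(q)}\cdot\tfrac{X^{\beta+1}}{\beta(\beta+1)}$ as required. For the zero sum,
\begin{align*}
\int_0^X(X-t)^{\beta-1}t^{\rho_\chi}dt=X^{\beta+\rho_\chi}B(\rho_\chi+1,\beta)=X^{\beta+\rho_\chi}\,\frac{\Gamma(\rho_\chi+1)\Gamma(\beta)}{\Gamma(\beta+\rho_\chi+1)},
\end{align*}
and using $\Gamma(\rho_\chi+1)/\rho_\chi=\Gamma(\rho_\chi)$ produces the desired Gamma-quotient. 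Finally, the error integral contributes $\mathcal{O}\bigl(X^{\beta}\log(2q)\log X\bigr)+\mathcal{O}\bigl(\tfrac{X^{\beta+1}}{T}\log^{2}(qXT)\bigr)$.

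The remaining step is to let $T\to\infty$ and control the tail of the sum over zeros. The standard density bound $N(T,\chi)\ll T\log(qT)$ together with $|\Gamma(\rho_\chi)/\Gamma(\rho_\chi+\beta+1)|\asymp |\gamma_\chi|^{-\beta-1}$ for $|\gamma_\chi|$ large ensures absolute convergence of $\sum_{\rho_\chi}\tfrac{\Gamma(\beta)\Gamma(\rho_\chi)}{\Gamma(\beta+\rho_\chi+1)}X^{\rho_\chi+\beta}$ for $\beta>0$, so the truncation tail vanishes as $T\to\infty$, and the error terms collapse to $\mathcal{O}(X\log(2q)\log X)$. The main technical obstacle is simply verifying this tail estimate carefully; everything else reduces to Beta/Gamma identities and the explicit formula already in hand.
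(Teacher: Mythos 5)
Your proposal is correct and follows essentially the same route as the paper: both reduce the double sum to $\int(X-t)^{\beta-1}\psi\,dt$ (you via swapping summation and partial summation, the paper by asserting this directly in \eqref{tong} after applying orthogonality to work with $\psi(t,\chi)$), then substitute the truncated explicit formula, evaluate the resulting Beta-function integrals, and let $T\to\infty$ using the zero-density bound. The Gamma-quotient bookkeeping and the error terms match the paper's computation.
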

\begin{proof}
    We have 
    \begin{align}\label{tong}
        \sum_{n\leq X}\left(\sum_{\substack{m \leq n-1\\m\equiv a(q)}}\Lambda(m)(n-m)^{\beta-1}\right)=\dfrac{1}{\varphi(q)}\sum_{\chi(q)}\overline{\chi}(a)\int_1^X\psi(t,\chi)(X-t)^{\beta-1}dt+\mathcal{O}(X).
    \end{align}
Using Lemma \ref{function_chi}, we calculate the right-hand side of \eqref{tong} is equal to, 
\begin{align*}
    &\int_1^X\left(\delta_0(\chi)t-\sum_{{\mid \gamma_\chi\mid \leq T}}\dfrac{t^{\rho_\chi}}{\rho_\chi}+\sum_{|\gamma_\chi|\leq 1}\dfrac{1}{\rho_\chi}\right)(X-t)^{\beta-1}dt+ \mathcal{O}\left(\left(\log (2q) \log X+\dfrac{X\log^2 (XqT)}{T} \right)\int_1^X(X-t)^{\beta-1}dt\right)\\
    &=\delta_0(\chi)\dfrac{X^{\beta+1}}{\beta(\beta+1)}-\sum_{{\mid \gamma_\chi\mid \leq T}}\dfrac{\Gamma(\beta)\Gamma(\rho_\chi)}{\Gamma(\beta+\rho_\chi)}\dfrac{X^{\rho_\chi+\beta}}{\rho_\chi+\beta}+\sum_{|\gamma_\chi|\leq 1}\dfrac{1}{\rho_\chi}\dfrac{X^{\beta}}{\beta}+\mathcal{O}\left(\dfrac{X^\beta}{\beta}\left(\log (2q) \log X+\dfrac{X\log^2 (XqT)}{T} \right)\right).
\end{align*}
Then we can infer the left-hand side of \eqref{tong} is equal to
\begin{align*}
    \dfrac{1}{\varphi(q)}\dfrac{X^{\beta+1}}{\beta(\beta+1)}-\dfrac{1}{\varphi(q)}\sum_{\chi(q)}\overline{\chi}(a)\sum_{{\mid \gamma_\chi\mid \leq T}}\dfrac{\Gamma(\beta)\Gamma(\rho_\chi)}{\Gamma(\beta+\rho_\chi)}\dfrac{X^{\rho_\chi+\beta}}{\rho_\chi+\beta}+\mathcal{O}\left(\dfrac{X^\beta}{\beta}\left(\log (2q) \log X+\dfrac{X\log^2 (XqT)}{T} \right)\right).
\end{align*}
Hence, this Lemma is completed when we choose $T$ tending to infinity.
\end{proof}
The proof of Proposition \ref{t2} is complete when we substitute Lemma \ref{calculation}, Lemma \ref{sumphi} and Lemma \ref{n-m,beta} into \eqref{S}. $\hfill\square$

In the next section, we study the estimate of two important functions for the proof of Theorem \ref{t1}.
\section{Second moments of Chebyshev function in arithmetic progressions}
For $x\geq 2$ and $1\leq h\leq x$, we define
\begin{align*}
    H_{a,q}(x)&:=\int_{0}^x  \left(\psi(t,q,a)-\dfrac{t}{\varphi(q)}+ \dfrac{\widetilde{\chi}(a)}{\varphi(q)}\dfrac{t^{\widetilde{\beta}}}{\widetilde{\beta}}\right)^2 dt,\\
    K_{a,q}(x,h)&:=\int_0^x \left( \psi(t+h,q,a)-\psi(t,q,a)-\dfrac{h}{\varphi(q)}+ \dfrac{\widetilde{\chi}(a)}{\varphi(q)}\dfrac{(t+h)^{\widetilde{\beta}}-t^{\widetilde{\beta}}}{\widetilde{\beta}}\right) ^2dt.
\end{align*} 
First, we prove the following lemmas.
\begin{lem}\cite[Theorem 10.17]{MV}\label{number zeros}  For any $T\geq 0$, we have
    \begin{equation*}
        \sum_{\substack{\rho_\chi\\T\leq |\gamma_\chi|\leq T+1}}1\ll \log q (T+2).
    \end{equation*}
\end{lem}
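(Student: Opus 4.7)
The plan is to derive the bound from the weighted zero-density sum $\sum_{\rho_\chi}\frac{1}{1+(T-\gamma_\chi)^2}$, which in turn can be controlled by comparing the partial-fraction expansion of $L'/L(s,\chi)$ (coming from the Hadamard factorization of the completed $L$-function) with its trivial bound in the region of absolute convergence. This is the classical ``Riemann--von Mangoldt'' route for counting zeros in short intervals, adapted to the $\chi$-dependence.

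First I would start from the identity
\begin{equation*}
-\frac{L'}{L}(s,\chi)+\frac{L'}{L}(s_0,\chi)=-\sum_{\rho_\chi}\Bigl(\frac{1}{s-\rho_\chi}-\frac{1}{s_0-\rho_\chi}\Bigr)+(\text{Gamma-factor terms}),
\end{equation*}
obtained from the Hadamard product of the completed L-function, where the sum over non-trivial zeros is absolutely convergent thanks to the paired differences. I would then specialize to $s=2+iT$ and $s_0=2$ and take real parts. The left-hand side is $O(1)$ by the trivial bound on $L'/L$ in the half-plane of absolute convergence, and the Gamma-factor contribution is $O(\log(q(T+2)))$ by Stirling's formula applied to $\Gamma'/\Gamma((s+a)/2)$ together with the $(q/\pi)^{s/2}$ factor. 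This yields
\begin{equation*}
\sum_{\rho_\chi}\frac{2-\beta_\chi}{(2-\beta_\chi)^2+(T-\gamma_\chi)^2}\ll \log(q(T+2)).
\end{equation*}

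Each summand on the left is positive, and for $T\le \gamma_\chi\le T+1$ with $0<\beta_\chi<1$ we have $(T-\gamma_\chi)^2\le 1$ and $1\le 2-\beta_\chi\le 2$, so each such zero contributes at least $\tfrac{1}{5}$. Hence the number of zeros with $T\le \gamma_\chi\le T+1$ is $\ll \log(q(T+2))$, and the symmetric bound for $-T-1\le \gamma_\chi\le -T$ follows either by the same computation at $s=2-iT$ or by replacing $\chi$ with $\overline{\chi}$.

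The step requiring the most care is the Stirling estimate for the Gamma-factor contribution: one must track the dependence on $q$ (from $(q/\pi)^{s/2}$) and on $T$ (from the argument of $\Gamma$) simultaneously, so that the resulting bound assembles cleanly into $O(\log(q(T+2)))$ with an absolute implied constant, with no spurious $\log q\cdot \log T$ term. The case of small $T$ is handled by the $T+2$ shift, which keeps the estimate uniform down to $T=0$.
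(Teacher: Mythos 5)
The paper does not actually prove this lemma --- it is quoted directly from Montgomery--Vaughan --- so the only question is whether your argument stands on its own. The overall strategy is the classical one, and everything downstream of your displayed inequality
\[
\sum_{\rho_\chi}\frac{2-\beta_\chi}{(2-\beta_\chi)^2+(T-\gamma_\chi)^2}\ll\log\bigl(q(T+2)\bigr)
\]
is correct: each zero with $T\le\gamma_\chi\le T+1$ contributes at least $\tfrac15$, and the negative ordinates follow by symmetry. The gap is in the derivation of that inequality. Differencing the Hadamard expansion at $s=2+iT$ and $s_0=2$ does kill the constant $B(\chi)$, but it leaves behind
\[
\sum_{\rho_\chi}\frac{2-\beta_\chi}{(2-\beta_\chi)^2+(T-\gamma_\chi)^2}
=\sum_{\rho_\chi}\frac{2-\beta_\chi}{(2-\beta_\chi)^2+\gamma_\chi^2}
+O\bigl(\log(T+2)\bigr),
\]
where, moreover, the two $\tfrac12\log(q/\pi)$ contributions cancel in the difference, so no $\log q$ survives on the right at all. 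The leftover sum $\Sigma_0=\sum_{\rho_\chi}\operatorname{Re}\frac{1}{2-\rho_\chi}$ is positive and sits on the wrong side of the inequality: it cannot be discarded, and showing $\Sigma_0\ll\log q$ is essentially the $T=0$ case of the estimate you are trying to prove. As written, the argument is circular.

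Two standard repairs. (i) Use the functional-equation identity $\operatorname{Re}B(\chi)=-\sum_{\rho_\chi}\operatorname{Re}(1/\rho_\chi)$, obtained from $\overline{B(\chi)}=B(\overline{\chi})$ and the fact that $\rho\mapsto\overline{\rho}$ maps the zeros of $L(s,\chi)$ onto those of $L(s,\overline{\chi})$; this converts the expansion into
\[
-\operatorname{Re}\frac{L'}{L}(s,\chi)=\frac12\log\frac{q}{\pi}+\frac12\operatorname{Re}\frac{\Gamma'}{\Gamma}\Bigl(\frac{s+a}{2}\Bigr)-\sum_{\rho_\chi}\operatorname{Re}\frac{1}{s-\rho_\chi},
\]
in which every zero term is individually nonnegative, and the bound follows immediately at $s=2+iT$. (ii) Avoid $B(\chi)$ entirely and count zeros in the disc $|s-(2+iT)|\le\tfrac32$ via Jensen or Borel--Carath\'eodory, using $|L(s,\chi)|\ll(q(|T|+2))^{A}$ there together with $|L(2+iT,\chi)|\gg1$. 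Either route restores the $\log q$ that your differencing step loses. A final cosmetic point: the Hadamard expansion applies to primitive characters; for imprimitive $\chi$ modulo $q$ the nontrivial zeros are those of the inducing character modulo $q^*\le q$, so the stated bound only improves.
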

\begin{lem}\label{2zero}
Let $\rho_\chi=  \beta_\chi+i\gamma_\chi$ be a non-trivial zero of $L(s,\chi)$. Then we have
\begin{align}\label{2zero1}
\sum_{ \mid \gamma'_\chi\mid >1}\dfrac{1}{|\gamma'_\chi|(1+|\gamma_\chi-\gamma'_\chi|)}\ll \dfrac{\log (q|\gamma_\chi|)\log (|\gamma_\chi|)}{|\gamma_\chi|},
\end{align}
\begin{align}\label{2zero2}
\sum_{\gamma'_\chi} \dfrac{1}{(1+|\gamma_\chi-\gamma'_\chi|)^2} \ll \log (q|\gamma_\chi|),
\end{align}
where $\sum_{ \gamma'_\chi}$ denotes a sum over the non-trivial zeros of Dirichlet $L$-function associated to $\chi$ modulo $q$.
\end{lem}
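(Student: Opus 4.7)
The plan is to derive both estimates from Lemma \ref{number zeros} by a dyadic decomposition of the set of zeros according to the imaginary parts $\gamma'_\chi$, controlling the number of zeros in each window by the log-type bound on zero counts. Throughout I write $\gamma = \gamma_\chi$ and $\gamma' = \gamma'_\chi$ for brevity, and I may assume $|\gamma|\ge 1$ (otherwise the claimed right-hand sides are trivial upper bounds up to constants).

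First I would treat \eqref{2zero2}. Group the zeros into bands $\cB_n = \{ \gamma' : n \le |\gamma-\gamma'| < n+1\}$ for $n=0,1,2,\dots$. Each $\cB_n$ is contained in the union of two unit intervals centered near $\gamma\pm n$, so Lemma \ref{number zeros} gives $\#\cB_n \ll \log\bigl(q(|\gamma|+n+2)\bigr)$. Hence
\begin{align*}
\sum_{\gamma'}\frac{1}{(1+|\gamma-\gamma'|)^2}
\ll \sum_{0\le n\le |\gamma|} \frac{\log(q|\gamma|)}{(1+n)^2}
+ \sum_{n>|\gamma|} \frac{\log(qn)}{(1+n)^2}
\ll \log(q|\gamma|),
\end{align*}
since the first sum is bounded by a convergent series times $\log(q|\gamma|)$ and the tail is $O(\log(q|\gamma|)/|\gamma|)$.

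For \eqref{2zero1} the extra factor $1/|\gamma'|$ makes the sum larger, so I would split the range of $|\gamma'|>1$ into three zones relative to $|\gamma|$: (i) $1<|\gamma'|\le |\gamma|/2$, (ii) $|\gamma|/2<|\gamma'|\le 2|\gamma|$, (iii) $|\gamma'|>2|\gamma|$. In zone (i), $|\gamma-\gamma'|\asymp |\gamma|$, and grouping zeros by $n\le |\gamma'|<n+1$ gives
\begin{align*}
\sum_{(\text{i})} \frac{1}{|\gamma'|(1+|\gamma-\gamma'|)}
\ll \frac{1}{|\gamma|} \sum_{1\le n\le |\gamma|/2} \frac{\log(qn)}{n}
\ll \frac{\log(q|\gamma|)\log|\gamma|}{|\gamma|}.
\end{align*}
In zone (ii), $|\gamma'|\asymp |\gamma|$ and I group by $|\gamma-\gamma'|\in [n,n+1)$ with $0\le n\ll |\gamma|$; Lemma \ref{number zeros} yields $\ll \log(q|\gamma|)$ zeros per band, whence
\begin{align*}
\sum_{(\text{ii})} \frac{1}{|\gamma'|(1+|\gamma-\gamma'|)}
\ll \frac{1}{|\gamma|} \sum_{0\le n\ll |\gamma|} \frac{\log(q|\gamma|)}{1+n}
\ll \frac{\log(q|\gamma|)\log|\gamma|}{|\gamma|}.
\end{align*}
In zone (iii), $|\gamma-\gamma'|\ge |\gamma'|/2$, so the summand is $\ll 1/|\gamma'|^2$ and
\begin{align*}
\sum_{(\text{iii})} \frac{1}{|\gamma'|(1+|\gamma-\gamma'|)}
\ll \sum_{n>2|\gamma|} \frac{\log(qn)}{n^2}
\ll \frac{\log(q|\gamma|)}{|\gamma|},
\end{align*}
which is absorbed by the desired bound. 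Adding the three contributions gives \eqref{2zero1}.

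The only real subtlety, and hence the ``main obstacle,'' is zone (i) of the first estimate: it is where the harmonic sum $\sum 1/n$ produces the extra $\log|\gamma|$ factor appearing on the right-hand side of \eqref{2zero1}, and one has to be careful that the zero-counting bound in this range is used with the correct $\log(qn)$ rather than $\log(q|\gamma|)$ to avoid an artificial loss. Everything else is straightforward dyadic bookkeeping, and no tools beyond Lemma \ref{number zeros} and the elementary sums $\sum 1/(1+n)^2\ll 1$, $\sum_{n\le N} 1/n\ll \log N$ are needed.
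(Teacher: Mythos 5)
Your proof is correct and follows essentially the same route as the paper: the identical three-way split of the range of $|\gamma'_\chi|$ relative to $|\gamma_\chi|$ for \eqref{2zero1}, with unit-band zero counts from Lemma \ref{number zeros} driving the harmonic sums. You additionally write out the dyadic argument for \eqref{2zero2}, which the paper leaves to the reader, and you track the $\log|\gamma_\chi|$ factor in the zone $1<|\gamma'_\chi|\le|\gamma_\chi|/2$ more carefully than the paper's corresponding case.
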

\begin{proof}
Firstly, to prove \eqref{2zero1}, we split into three cases. 
    
\textit{Case 1:} If $|\gamma'_\chi| > 2|\gamma_\chi|$. Then we have 
$$|\gamma_\chi-\gamma'_\chi| \geq |\gamma'_\chi|-|\gamma_\chi| >\dfrac{|\gamma'_\chi|}{2}. $$
Thus, by using Lemma \cite{BHMS}, we obtain 
\begin{align*}
    \sum_{ \substack{\gamma'_\chi\\|\gamma'_\chi| >2|\gamma_\chi|}}\dfrac{1}{|\gamma'_\chi|(1+|\gamma_\chi-\gamma'_\chi|)}\ll  \sum_{ \substack{\gamma'_\chi\\|\gamma'_\chi| > 2|\gamma_\chi|}}\dfrac{1}{|\gamma'_\chi|^2} \ll \dfrac{\log (q|\gamma_\chi|)}{|\gamma_\chi|}.
\end{align*}
\textit{Case 2:} If $|\gamma'_\chi| <\dfrac{1}{2}|\gamma_\chi|$. Then we have 
$$|\gamma_\chi-\gamma'_\chi| \geq |\gamma_\chi|-|\gamma'_\chi| >\dfrac{|\gamma_\chi|}{2}. $$
So we obtain
\begin{align*}
    \sum_{ \substack{\gamma'_\chi\\1<|\gamma'_\chi| <\dfrac{1}{2}|\gamma_\chi|}}\dfrac{1}{|\gamma'_\chi|(1+|\gamma_\chi-\gamma'_\chi|)}\ll \dfrac{1}{|\gamma_\chi|} \sum_{ \substack{\gamma'_\chi\\1<|\gamma'_\chi| < \dfrac{1}{2}|\gamma_\chi|}}\dfrac{1}{|\gamma_\chi'|} \ll \dfrac{\log (q|\gamma_\chi|)}{|\gamma_\chi|}.
\end{align*}
\textit{Case 3:} If $\dfrac{1}{2}|\gamma_\chi|\leq|\gamma'_\chi| \le 2|\gamma_\chi|$, we have 
$$|\gamma_\chi-\gamma'_\chi| \le |\gamma'_\chi|+|\gamma_\chi| \le 3|\gamma_\chi|. $$This implies 
\begin{align*}
    &\sum_{ \substack{\gamma'_\chi\\|\gamma'_\chi| \le 2|\gamma_\chi|}}\dfrac{1}{|\gamma'_\chi|(1+|\gamma_\chi-\gamma'_\chi|)}\le \sum_{ \substack{\gamma'_\chi\\|\gamma_\chi-\gamma'_\chi| \le 3|\gamma_\chi|}}\dfrac{1}{|\gamma'_\chi|(1+|\gamma_\chi-\gamma'_\chi|)}\\
    & =\sum_{ \substack{\gamma'_\chi\\|\gamma_\chi-\gamma'_\chi| \le 1}} \dfrac{1}{|\gamma'_\chi|(1+|\gamma_\chi-\gamma'_\chi|)}+ \sum_{ \substack{\gamma'_\chi\\1<|\gamma_\chi-\gamma'_\chi| \le 3|\gamma_\chi|}} \dfrac{1}{|\gamma'_\chi|(1+|\gamma_\chi-\gamma'_\chi|)}\\
    & \le \sum_{ \substack{\gamma'_\chi\\|\gamma_\chi-\gamma'_\chi| \le 1}} \dfrac{1}{|\gamma_\chi|}+ \sum_{1< n \le 3|\gamma_\chi|}\dfrac{1}{n}\sum_{\substack{\gamma'_\chi\\n<|\gamma_\chi-\gamma'_\chi|\le n+1}} \dfrac{1}{|\gamma'_\chi|}.
\end{align*}
By using Lemma \ref{number zeros}, we can estimate the last sum to be
\begin{align*}
  \ll  \dfrac{\log (q|\gamma_\chi|)}{|\gamma_\chi|} \left(1+ \sum_{1< n \le 3|\gamma_\chi|}\dfrac{1}{n} \right) \ll \dfrac{\log (q|\gamma_\chi|)\log (|\gamma_\chi|)}{|\gamma_\chi|}.
\end{align*}

So we obtain the stated result. We can also prove \eqref{2zero2}. 
\end{proof}

\begin{lem}
\label{lem:sum_low_lying_zero}
For $q\ge1$ and $t\ge1$, we have
\begin{equation*}
\biggl|
\sum_{\substack{
|\gamma_{\chi}|\le 1\\
\rho_{\chi}\neq \widetilde{\beta}
}}\frac{t^{\rho_{\chi}}-1}{\rho_{\chi}}
\biggr|
\ll
t^{B_{q}^{\ast}}\log(2q).
\end{equation*}
\end{lem}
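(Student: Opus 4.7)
The plan is to convert each summand into an integral via
\[
\frac{t^{\rho_\chi}-1}{\rho_\chi} = \int_1^t u^{\rho_\chi-1}\,du,
\]
swap the (finite) sum over zeros with the integral, and then bound the resulting integrand $\sum u^{\rho_\chi-1}$ uniformly in $u\in[1,t]$. This sidesteps the extra $\log t$ factor one incurs if each summand is estimated separately.

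The first step is to verify that every zero $\rho_\chi = \beta_\chi + i\gamma_\chi$ appearing in the sum satisfies $\beta_\chi \le B_q^{\ast}$. The inequality $\beta_\chi \le B_q^\sharp$ is immediate from the definition of $B_q^\sharp$ since $\rho_\chi \neq \widetilde{\beta}$. For the bound $\beta_\chi \le 1-\eta$, I would invoke the classical Landau--de la Vall\'ee Poussin zero-free region, which guarantees $\beta_\chi \le 1 - c/\log(2q)$ for non-exceptional zeros with $|\gamma_\chi|\le 1$; choosing the small constant $c_1(\epsilon)$ in the definition of $\eta$ so that $c_1(\epsilon) \le c$ yields $1-\eta \ge 1 - c/\log(2q) \ge \beta_\chi$. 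Together these give $\beta_\chi \le \min(B_q^\sharp,1-\eta) = B_q^{\ast}$.

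Next, Lemma \ref{number zeros} applied with $T=0$ bounds the number of zeros in the sum by $\ll \log(2q)$. Combined with the pointwise bound $|u^{\rho_\chi-1}| = u^{\beta_\chi - 1}\le u^{B_q^{\ast}-1}$ (valid because $u \ge 1$ and $\beta_\chi \le B_q^{\ast}$), this yields
\[
\biggl|\sum_{\substack{|\gamma_\chi|\le 1\\ \rho_\chi\neq\widetilde{\beta}}} u^{\rho_\chi - 1}\biggr| \ll u^{B_q^{\ast}-1}\log(2q),
\]
and integrating over $u\in[1,t]$ produces an upper bound of order $\log(2q)\cdot(t^{B_q^{\ast}}-1)/B_q^{\ast}$.

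Finally, to remove the $1/B_q^{\ast}$ factor I would use $B_q^{\ast}\ge 1/2$: the symmetry $\rho\leftrightarrow 1-\bar\rho$ among zeros of $L(s,\chi)$ coming from the functional equation forces $B_q^\sharp \ge 1/2$, while $1-\eta \ge 1/2$ is clear for our small $\eta$. This gives $(t^{B_q^{\ast}}-1)/B_q^{\ast}\le 2 t^{B_q^{\ast}}$, which combined with the previous display delivers the required $\ll t^{B_q^{\ast}}\log(2q)$. The main subtlety is the calibration between the two pieces of $B_q^{\ast} = \min(B_q^\sharp,1-\eta)$: one must pick $c_1(\epsilon)$ compatibly with the zero-free-region constant for the upper bound on individual $\beta_\chi$, and simultaneously rely on the functional equation to keep $B_q^{\ast}$ bounded below. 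Once these ingredients are in place, the integral-swap trick yields the asserted bound without any spurious $\log t$ factor.
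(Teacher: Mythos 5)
Your proof is correct, and it reaches the bound by a somewhat different route than the paper. The paper splits the zeros into $|\rho_\chi|\ge \tfrac14$ and $|\rho_\chi|\le \tfrac14$: for the former it bounds each term trivially by $t^{\beta_\chi}+1\ll t^{B_q^{\ast}}$, and for the latter it writes $\frac{t^{\rho_\chi}-1}{\rho_\chi}=\frac{\log t}{\rho_\chi}\int_0^{\rho_\chi}t^s\,ds$, obtaining $t^{\beta_\chi}\log t\le t^{1/4}\log t\ll t^{1/2}\le t^{B_q^{\ast}}$, so the spurious $\log t$ is absorbed using the gap between $\tfrac14$ and $\tfrac12$. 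Your single representation $\frac{t^{\rho_\chi}-1}{\rho_\chi}=\int_1^t u^{\rho_\chi-1}\,du$ treats all zeros uniformly and trades the $\log t$ for a factor $1/B_q^{\ast}\le 2$, which removes the case distinction and is arguably cleaner. Both arguments then rest on the same two inputs: the zero count $\sum_{|\gamma_\chi|\le 1}1\ll\log(2q)$ (Lemma \ref{number zeros} with $T=0$) and the lower bound $B_q^{\ast}\ge\tfrac12$. Your explicit verification that $\beta_\chi\le B_q^{\ast}$ for every non-exceptional low-lying zero --- combining the definition of $B_q^{\sharp}$ with the classical zero-free region and a compatible choice of $c_1(\epsilon)$ --- addresses a step the paper uses implicitly when it writes $t^{\beta_\chi}\ll t^{B_q^{\ast}}$; making that calibration explicit strengthens rather than departs from the argument.
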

\begin{proof}
When $|\rho_{\chi}|\ge\frac{1}{4}$ and $\rho_{\chi}\neq \widetilde{\beta}$, we just have
\[
\frac{t^{\rho_{\chi}}-1}{\rho_{\chi}}
\ll
t^{\beta_{\chi}}+1
\ll
t^{B_{q}^{\ast}}.
\]
When $|\rho_{\chi}|\le\frac{1}{4}$ and so $\beta_{\chi}\le\frac{1}{4}$, we have
\[
\frac{t^{\rho_{\chi}}-1}{\rho_{\chi}}
=
\frac{\log t}{\rho_{\chi}}
\int_{0}^{\rho_{\chi}}t^{s}ds,
\]
where the contour of integration is the line segment from $0$ to $\rho_{\chi}$,
and then
\begin{align*}
\frac{t^{\rho_{\chi}}-1}{\rho_{\chi}}
&\le
(\log t)\sup_{0\le\sigma\le\beta_{\chi}}t^{\beta_{\chi}}
\ll
t^{\beta_{\chi}}(\log t)
\le
t^{\frac{1}{4}}(\log t)
\ll
t^{\frac{1}{2}}
\ll
t^{B_{q}^{\ast}}.
\end{align*}
since $B_{q}^{\ast}\ge\frac{1}{2}$. We thus have
\[
\biggl|
\sum_{\substack{
|\gamma_{\chi}|\le 1\\
\rho_{\chi}\neq \widetilde{\beta}
}}\frac{t^{\rho_{\chi}}-1}{\gamma_{\chi}}
\biggr|
\ll
t^{B_{q}^{\ast}}
\sum_{|\rho_{\chi}|\le 1}1
\ll
t^{B_{q}^{\ast}}\log (2q).
\] This completes the proof.
\end{proof}
Now we show a good upper bound of  the functions $H_{a,q}(x)$ and $K_{a,q}(x,h)$ as follows.
\begin{lem}\label{lemh}
We have the estimate
$$H_{a,q}(x) \ll x^{2B^*_q+1}\log^2(qx).$$
\end{lem}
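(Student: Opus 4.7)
The plan is to exploit orthogonality of Dirichlet characters to reduce to a uniform bound for each primitive character, then apply the explicit formula of Lemma \ref{function_chi} and handle the resulting zero sum via a mean-square estimate built on the pair bound of Lemma \ref{2zero}. Concretely, setting $E(t,\chi) := \psi(t,\chi) - \delta_0(\chi) t$ for $\chi\neq\widetilde{\chi}$ and $E(t,\widetilde{\chi}) := \psi(t,\widetilde{\chi}) - \delta_0(\widetilde{\chi}) t + t^{\widetilde{\beta}}/\widetilde{\beta}$, the orthogonality relation \eqref{ortho_cheby} yields
$$\psi(t,q,a) - \frac{t}{\varphi(q)} + \frac{\widetilde{\chi}(a)}{\varphi(q)}\frac{t^{\widetilde{\beta}}}{\widetilde{\beta}} = \frac{1}{\varphi(q)}\sum_{\chi(q)} \overline{\chi}(a)\,E(t,\chi).$$
Cauchy--Schwarz in $\chi$ reduces the problem to showing $\int_0^x |E(t,\chi)|^2\,dt\ll x^{2B^*_q+1}\log^2(qx)$ uniformly in $\chi$, and Remark \ref{imprimitive} allows the passage from imprimitive to primitive characters at negligible cost.

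Next, I would apply Lemma \ref{function_chi} with a cutoff $T$ chosen large enough that the remainder obeys $R(t,T,\chi)\ll \log(qx)$ for $2\le t\le x$; taking $T=x^2$, say, makes the secondary term $t\log^2(tqT)/T$ harmless. Since the Siegel zero $\widetilde{\beta}$ sits in the range $|\gamma|\le 1$, its contribution $-(t^{\widetilde{\beta}}-1)/\widetilde{\beta}$ cancels against the added $t^{\widetilde{\beta}}/\widetilde{\beta}$ up to an $O(1)$ constant, leaving
$$E(t,\chi) = -\sum_{1<|\gamma_\chi|\leq T}\frac{t^{\rho_\chi}}{\rho_\chi} - \sum_{\substack{|\gamma_\chi|\leq 1\\ \rho_\chi\neq \widetilde{\beta}}}\frac{t^{\rho_\chi}-1}{\rho_\chi} + O(\log(qx)).$$
By Lemma \ref{lem:sum_low_lying_zero}, the low-lying-zero sum is $\ll t^{B^*_q}\log(2q)$ pointwise, so its $L^2$ norm on $[0,x]$ lies within the target; the $O(\log(qx))$ piece contributes $\ll x\log^2(qx)\ll x^{2B^*_q+1}\log^2(qx)$.

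The heart of the argument is the mean square of the truncated zero sum. Expanding the square,
$$\int_0^x \Bigl|\sum_{1<|\gamma_\chi|\leq T}\frac{t^{\rho_\chi}}{\rho_\chi}\Bigr|^2\,dt = \sum_{\rho_\chi,\rho'_\chi}\frac{1}{\rho_\chi\overline{\rho'_\chi}}\cdot\frac{x^{\rho_\chi+\overline{\rho'_\chi}+1}}{\rho_\chi + \overline{\rho'_\chi}+1},$$
and the Vinogradov--Korobov zero-free region (with $c_1(\epsilon)$ chosen small enough) guarantees $\beta_\chi\le B^*_q$ for every non-Siegel zero with $|\gamma_\chi|\le T$. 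Each term is then bounded in modulus by $x^{2B^*_q+1}/[\,|\gamma_\chi||\gamma'_\chi|(1+|\gamma_\chi-\gamma'_\chi|)\,]$. Summing first over $\gamma'_\chi$ via \eqref{2zero1} yields $\ll \log(q|\gamma_\chi|)\log|\gamma_\chi|/|\gamma_\chi|$, and then a dyadic decomposition over $\gamma_\chi$ using Lemma \ref{number zeros} produces the remaining factor $\log^2(qx)$.

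The main obstacle is precisely this double sum: without an off-diagonal estimate such as \eqref{2zero1} the cross terms would inflate the bound by a power of $x$, so Lemma \ref{2zero} is doing the essential work. Secondary issues are (a) calibrating $T$ so that $R(t,T,\chi)$ is controlled yet the zero-free region still dominates up to height $T$, and (b) choosing the constant in $c_1(\epsilon)$ small enough that $B^*_q$ genuinely majorizes the real parts of all non-Siegel zeros in play.
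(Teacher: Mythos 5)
Your proposal is correct and follows essentially the same route as the paper: orthogonality and Cauchy--Schwarz over characters, the explicit formula of Lemma \ref{function_chi}, Lemma \ref{lem:sum_low_lying_zero} for the low-lying zeros, and the expanded mean square of the truncated zero sum controlled by \eqref{2zero1} together with Lemma \ref{number zeros}. The only cosmetic differences are that you fix $T=x^2$ where the paper lets $T\to\infty$, and you omit the paper's preliminary trivial treatment of the range $q\ge x$ (which it uses only to justify $\log^2(2q)\le x$ at the very end); neither affects the argument.
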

\begin{proof}

When $q\ge x\ge t$, we have
\begin{align*}
\psi(t,a,q)
-
\frac{t}{\phi(q)}
+
\dfrac{\widetilde{\chi}(a)}{\varphi(q)}\dfrac{t^{\widetilde{\beta}}}{\widetilde{\beta}}
\ll
\log t\sum_{\substack{
n\le t\\
n\equiv a(q)
}}
1
+
\frac{t\log q}{q}
\ll
\log (qt),
\end{align*}
where we used the bound $\phi(q)\gg q/\log q$ \cite[Theorem 2.9]{MV}.
We thus have
\begin{equation*}
\int_{1}^{x}
\biggl(
\psi(t,a,q)
-
\frac{t}{\phi(q)}
+
\dfrac{\widetilde{\chi}(a)}{\varphi(q)}\dfrac{t^{\widetilde{\beta}}}{\widetilde{\beta}}
\biggr)^{2}dt
\ll
x\log ^2(qx)
\ll
x^{2B_{q}^{\ast}+1}
\log^2( qx)
\end{equation*}
if $q\ge x$. So we may assume $q\le x$.

By Lemma \ref{function_chi}, we can rewrite 
\begin{align}\label{H1}
\begin{split}
H_{a,q}(x)&\ll \dfrac{1}{\varphi^2(q)}\int_0^x   \sum_{\chi(q)}|\overline{\chi}(a)|^2 \left( \sum_{\substack{1 < \mid \gamma_\chi\mid \leq T}}\dfrac{t^{\rho_\chi}}{\rho_\chi}+ \sum_{\substack{\mid \gamma_\chi\mid \leq 1\\\rho_\chi \neq\widetilde{ \beta}} }\dfrac{t^{\rho_\chi}-1}{\rho_\chi}\right) ^2dt \\
&\hspace{1cm}+ \int_0^x \left(\log (2q)\log t+\dfrac{t}{T} \log^2 (tqT) \right)^2dt\\
&\ll \dfrac{1}{\varphi^2(q)}\sum_{\chi(q)}|\overline{\chi}(a)|^2 \left( \int_1^x \left|   \sum_{\substack{1 < \mid \gamma_\chi\mid \leq T}}\dfrac{t^{\rho_\chi}}{\rho_\chi}\right|  ^2dt+\int_1^x \left|   \sum_{\substack{\mid \gamma_\chi\mid \leq T\\\rho_\chi \neq \widetilde{ \beta}} }\dfrac{t^{\rho_\chi}-1}{\rho_\chi}\right|  ^2dt\right)\\
&\hspace{1cm}+x\log^2(2q)\log^2x+\dfrac{x^3}{T^2}\log^4(xqT)
\end{split}
\end{align}
since $$\int_0^1 \left|   \sum_{\substack{\mid \gamma_\chi\mid \leq T\\\rho_\chi \neq \widetilde{ \beta}} }\dfrac{t^{\rho_\chi}-1}{\rho_\chi}\right|  ^2dt\ll 1.$$
By Lemma \ref{lem:sum_low_lying_zero}, we have 
\begin{align*}
\int_1^x \left|  \sum_{\substack{\mid \gamma_\chi\mid \leq 1\\\rho_\chi \neq \widetilde{ \beta}} }\dfrac{t^{\rho_\chi}-1}{\rho_\chi}\right|  ^2dt \ll \int_1^x (t^{B_q^*}\log (2q))^2dt\ll x^{2B_q^*+1}\log^2(2q).
\end{align*}
Now we estimate  the remaining sum, that is
\begin{align*}
\int_1^x \left|  \sum_{\substack{1 < \mid \gamma_\chi\mid \leq T}}\dfrac{t^{\rho_\chi}}{\rho_\chi}\right|  ^2dt
 &\ll \sum_{\substack{1 < \mid \gamma_\chi\mid \leq T}}\sum_{\substack{1 < \mid \gamma'_\chi\mid \leq T}}\dfrac{x^{\beta_\chi+\beta'_\chi+1}}{|\gamma_\chi||\gamma'_\chi|(1+|\gamma_\chi-\gamma'_\chi|)}\\
&\ll x^{2B^*_q+1} \sum_{1<\mid \gamma_\chi \mid \leq T}\sum_{1<\mid \gamma'_\chi \mid \leq T}\dfrac{1}{|\gamma_\chi||\gamma'_\chi|(1+|\gamma_\chi-\gamma'_\chi|)}.
\end{align*}
Moreover, using Lemma \ref{2zero}, one has
\begin{align*}
\sum_{1<|\gamma_\chi|  \leq T}\sum_{ \gamma'_\chi }\dfrac{1}{|\gamma_\chi||\gamma'_\chi|(1+|\gamma_\chi-\gamma'_\chi|)} &\ll \sum_{1<|\gamma_\chi|  \leq T} \dfrac{\log (q|\gamma_\chi|)\log (|\gamma_\chi|)}{|\gamma_\chi|^2}\\
&\ll \log^2(2q).
\end{align*}
So
$$H_{a,q}(x) \ll  x^{2B^*_q+1}\log^2(2q) +x\log^2 (2q)\log^2x+\dfrac{x^3}{T^2}\log^4(xqT).$$
Now, we choose  $T$ tending to infinity. Therefore, since $B^*_q \geq \dfrac{1}{2}$ and $\log^2(2q)\leq q\leq x$, we assert that
$$H_{a,q}(x) \ll x^{2B^*_q+1}\log^2(qx).$$
\end{proof}

\begin{lem}\label{lemk}
For $1 \leq h \leq x $, we have 
$$K_{a,q}(x,h) \ll hx^{2B^*_q}\log ^2(qx).$$
\end{lem}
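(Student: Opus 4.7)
The plan is to mirror the proof of Lemma \ref{lemh} closely, the crucial new ingredient being the two complementary bounds on the finite-difference
$$A_{\rho_\chi}(t) := \frac{(t+h)^{\rho_\chi} - t^{\rho_\chi}}{\rho_\chi}:$$
the trivial bound $|A_{\rho_\chi}(t)| \leq 2(t+h)^{\beta_\chi}/|\rho_\chi|$, and the ``mean value'' bound
$$|A_{\rho_\chi}(t)| = \left|\int_t^{t+h} u^{\rho_\chi - 1}\, du\right| \leq h\,(t+h)^{\beta_\chi - 1},$$
valid for $\beta_\chi \leq 1$. As in Lemma \ref{lemh}, we first dispose of the case $q \geq x$ by a trivial argument: the interval $(t, t+h]$ contains at most $O(h/q+1) = O(1)$ integers $\equiv a \pmod q$, so each summand defining $K_{a,q}(x,h)$ is $O(\log(qx))$, yielding $K_{a,q}(x,h) \ll x\log^2(qx) \leq h\,x^{2B_q^*}\log^2(qx)$ since $h \geq 1$ and $B_q^* \geq 1/2$.

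For $q \leq x$, we apply Lemma \ref{function_chi} at $t+h$ and at $t$, subtract, and use the orthogonality relation \eqref{ortho_cheby} to rewrite the integrand of $K_{a,q}(x,h)$ as
$$-\frac{1}{\varphi(q)} \sum_{\chi(q)} \overline{\chi}(a)\, S_\chi(t,h,T) + O\!\left(\log(2q)\log t + \frac{(t+h)\log^2((t+h)qT)}{T}\right),$$
where $S_\chi(t,h,T) := \sum_{|\gamma_\chi|\leq T,\, \rho_\chi \neq \widetilde{\beta}} A_{\rho_\chi}(t)$. After squaring, integrating over $t \in [0,x]$, applying Cauchy--Schwarz across the $\varphi(q)$ characters, and letting $T \to \infty$, the task reduces to establishing, for each $\chi$, the bound
$$\int_0^x |S_\chi(t,h,T)|^2\, dt \ll h\, x^{2B_q^*}\, \log^2(qx),$$
uniformly in $T$.

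For low-lying zeros $|\gamma_\chi| \leq 1$ (other than $\widetilde{\beta}$), Lemma \ref{number zeros} gives $O(\log(2q))$ such zeros; applying the mean-value bound pointwise, their contribution to $\int_0^x|S_\chi|^2\,dt$ is at most $h^2\log^2(2q)\int_1^x t^{2B_q^*-2}\,dt \ll h^2 x^{2B_q^*-1}\log^2(2q) \leq h\,x^{2B_q^*}\log^2(2q)$, using $h \leq x$. For higher zeros $1 < |\gamma_\chi| \leq T$, we expand the square, interchange integrals, and use the identity $A_{\rho_\chi}(t) = \int_0^h (t+u)^{\rho_\chi-1}\,du$ to obtain
$$\int_1^x A_{\rho_\chi}(t)\, \overline{A_{\rho'_\chi}(t)}\, dt = \int_0^h\!\int_0^h\!\int_1^x (t+u)^{\rho_\chi-1}(t+v)^{\overline{\rho'_\chi}-1}\, dt\, du\, dv.$$
The inner $t$-integral admits a closed form (for $u=v$ it equals $[(x+u)^{\rho_\chi+\overline{\rho'_\chi}-1} - (1+u)^{\rho_\chi+\overline{\rho'_\chi}-1}]/(\rho_\chi+\overline{\rho'_\chi}-1)$; an analogous expression handles $u \neq v$ via one step of integration by parts), whose modulus can be bounded by $C(x+h)^{\beta_\chi+\beta'_\chi-1}/(1+|\gamma_\chi - \gamma'_\chi|)$. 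The $du\,dv$ integration over $[0,h]^2$ then produces a factor $h^2$, and summing over $\rho_\chi, \rho'_\chi$ via Lemma \ref{2zero} --- exactly as at the end of the proof of Lemma \ref{lemh} --- yields $\int_0^x |S_\chi|^2\, dt \ll h^2 x^{2B_q^*-1}\log^2(qx) \leq h\, x^{2B_q^*}\log^2(qx)$.

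The main technical obstacle will be establishing the $1/(1+|\gamma_\chi - \gamma'_\chi|)$ denominator saving uniformly in $u, v \in [0, h]$. A naive integration by parts only produces a $1/|\rho_\chi|$ denominator rather than a $1/|\gamma_\chi - \gamma'_\chi|$ one, so a careful case split --- $|\gamma_\chi - \gamma'_\chi| \geq 1$, where $|\rho_\chi + \overline{\rho'_\chi} - 1| \geq |\gamma_\chi - \gamma'_\chi|$ automatically, versus $|\gamma_\chi - \gamma'_\chi| < 1$, where a direct $L^\infty$-bound combined with the sparsity of zeros in unit windows (Lemma \ref{number zeros}) suffices --- will be needed to handle near-diagonal pairs with $\beta_\chi + \beta'_\chi \approx 1$.
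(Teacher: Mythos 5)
Your overall skeleton (dispose of $q\ge x$, apply the explicit formula to the difference, separate low-lying and higher zeros, aim for an off-diagonal saving and invoke Lemma \ref{2zero}) is reasonable, but the execution for the range $1<|\gamma_\chi|\le T$ has a genuine gap. In that range you keep only the representation $A_{\rho_\chi}(t)=\int_0^h(t+u)^{\rho_\chi-1}\,du$, which produces the factor $h^2$ but discards the $1/|\rho_\chi|$ decay entirely. With your claimed per-pair bound $h^2(x+h)^{\beta_\chi+\beta'_\chi-1}/(1+|\gamma_\chi-\gamma'_\chi|)$, the remaining double sum is $\sum_{\rho_\chi}\sum_{\rho'_\chi}(1+|\gamma_\chi-\gamma'_\chi|)^{-1}$ with no factors $1/(|\gamma_\chi||\gamma'_\chi|)$; by Lemma \ref{number zeros} this grows like $T\log^2(qT)$ and diverges as $T\to\infty$, so Lemma \ref{2zero} cannot be applied ``exactly as at the end of the proof of Lemma \ref{lemh}'' --- both parts of that lemma require at least one reciprocal of an ordinate or a squared denominator. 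The $\min\bigl\{h(t+h)^{\beta_\chi-1},\,2(t+h)^{\beta_\chi}/|\rho_\chi|\bigr\}$ dichotomy you state at the outset is not optional decoration: it must be carried inside the double sum, with the cut at $|\gamma_\chi|\asymp x/h$, to make the sum over zeros converge. A second, related problem is that the inner-integral bound $\ll(x+h)^{\beta_\chi+\beta'_\chi-1}/(1+|\gamma_\chi-\gamma'_\chi|)$ is not uniform in $u,v\in[0,h]$: the phase derivative of $(t+u)^{i\gamma_\chi}(t+v)^{-i\gamma'_\chi}$ equals $(\gamma_\chi-\gamma'_\chi)/t+\mathcal{O}\bigl((|\gamma_\chi|+|\gamma'_\chi|)h/t^2\bigr)$, so for large ordinates with $|\gamma_\chi-\gamma'_\chi|$ small compared to $|\gamma_\chi|h/x$ the off-diagonal saving is simply unavailable; your proposed case split at $|\gamma_\chi-\gamma'_\chi|=1$ does not address this.

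The paper sidesteps both difficulties with the Saffari--Vaughan device, which your proposal omits. One first treats relative increments $h=t\theta$, so that the difference factor $((1+\theta)^{\rho_\chi}-1)/\rho_\chi$ is independent of $t$ and bounded by $\min\{\theta,2/|\gamma_\chi|\}$, and one inserts an extra average over the dyadic endpoint ($v\in[1,2]$). That second integration upgrades the off-diagonal denominator to $|\rho_\chi+\overline{\rho'_\chi}+1|^{-2}$, after which the symmetrized sum $\sum_{\gamma_\chi}\min\{\theta^2,\gamma_\chi^{-2}\}\sum_{\gamma'_\chi}(1+|\gamma_\chi-\gamma'_\chi|)^{-2}$ converges by \eqref{2zero2} and gives $x^{2B^*_q+1}\theta\log^2(q/\theta)$; one then converts back to a fixed increment $h$ and sums over dyadic blocks of $[h,x]$, handling $[0,h]$ via $H_{a,q}(h)$. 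If you want to avoid this averaging device, you would at minimum need to restore the $\min$ dichotomy in the double sum and accept a careful analysis of the near-diagonal large-ordinate pairs, and even then the single power of $(1+|\gamma_\chi-\gamma'_\chi|)^{-1}$ appears to cost an extra factor of $\log x$ over the stated bound.
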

\begin{proof}

For $0<\theta\leq 1$, we consider 
\begin{align*}
K(x, \theta)&=\int_x^{2x} \left( \psi(t+t\theta,q,a)-\psi(t,q,a)-\dfrac{t\theta}{\varphi(q)}+\dfrac{\widetilde{\chi}(a)}{\varphi(q)}\dfrac{(t+t\theta)^{\widetilde{\beta}}-t^{\widetilde{\beta}}}{\widetilde{\beta}}\right) ^2dt\\
 &\ll \int_1^2 \int_{xv/2}^{2xv} \left( \psi(t+t\theta,q,a)-\psi(t,q,a)-\dfrac{t\theta}{\varphi(q)}+\dfrac{\widetilde{\chi}(a)}{\varphi(q)}\dfrac{(t+t\theta)^{\widetilde{\beta}}-t^{\widetilde{\beta}}}{\widetilde{\beta}}\right) ^2dt dv.
\end{align*}
Now, we use Lemma \ref{function_chi}, but we need to consider the case where $\chi$ is  imprimitive. Suppose that $\chi$ mod $q$ is induced by the primitive character $\chi^*$ mod $q^*$, $q>1$. Then, for $h\leq t$ we have
\begin{align*}
\psi(t+h,q,a)-\psi(t,q,a)=\dfrac{1}{\varphi(q)}\sum_{\chi(q)} \overline{\chi}(a)\left(\psi(t+h,\chi^*)-\psi(t,\chi^*)\right)+\mathcal{O}\left(\sum_{p|q}\sum_{t<p^k\le t+h}\log p\right),
\end{align*}
where 
\begin{align*}
    \sum_{p|q}\sum_{t<p^k\le t+h}\log p \ll \sum_{p|q}\left(\left[\dfrac{\log (t+h)}{\log p}\right]-\left[\dfrac{\log t}{\log p}\right]\right)\log p\ll \log q.
\end{align*}
\begin{align}\label{k1}
\begin{split}
&\int_{xv/2}^{2xv} \left( \psi(t+t\theta,q,a)-\psi(t,q,a)-\dfrac{t\theta}{\varphi(q)}+\dfrac{\widetilde{\chi}(a)}{\varphi(q)}\dfrac{(t+t\theta)^{\widetilde{\beta}}-t^{\widetilde{\beta}}}{\widetilde{\beta}}\right) ^2dt\\
&\ll\dfrac{1}{\varphi^2(q)} \int_{xv/2}^{2xv} \left| \sum_{\chi(q)}\overline{\chi}(a)\sum_{\substack{|\gamma_\chi|\leq T\\\rho_\chi\neq \widetilde{\beta}}}\dfrac{t^{\rho_\chi}}{\rho_\chi} ((1+\theta)^{\rho_\chi}-1)\right|^2 dt+\int_{xv/2}^{2xv}\left( \log (qt)+ \dfrac{t\log^2 (tqT)}{T} \right) ^2 dt\\
&=:K_1(v)+K_2(v).
\end{split}
\end{align}
Trivially, $K_2(v) \ll xv\log ^2(qx)+\dfrac{(xv)^3\log^4 (xqT)}{T^2} ,$ then
\begin{align}\label{k2}
\int_1^2 K_2(v) dv \ll x\log ^2(qx)+\dfrac{x^3\log^4 (xqT)}{T^2}.
\end{align}
By the Cauchy-Schwarz inequality, we obtain
\begin{align*}
K_1(v) &\ll \dfrac{1}{\varphi^2(q)} \sum_{\chi(q)}|\overline{\chi}(a)|^2\int_{xv/2}^{2xv} \left| \sum_{\substack{|\gamma_\chi|\leq T\\\rho_\chi\neq \widetilde{\beta}}}\dfrac{t^{\rho_\chi}}{\rho_\chi} ((1+\theta)^{\rho_\chi}-1)\right|^2 dt\\
& =\dfrac{1}{\varphi^2(q)}\sum_{\chi(q)}|\overline{\chi}(a)|^2\int_{xv/2}^{2xv} \sum_{\substack{|\gamma_\chi|\leq T\\\rho_\chi\neq \widetilde{\beta}}}\sum_{\substack{|\gamma'_\chi|\leq T\\\rho'_\chi\neq \widetilde{\beta}}}\dfrac{(1+\theta)^{\rho_\chi}-1}{\rho_\chi}.\dfrac{(1+\theta)^{\overline{\rho'_\chi}}-1}{\overline{\rho'_\chi}}t^{\rho_\chi} t^{\overline{\rho'_\chi}} dt\\
& = \dfrac{1}{\varphi^2(q)}\sum_{\chi(q)}|\overline{\chi}(a)|^2\sum_{\substack{|\gamma_\chi|\leq T\\\rho_\chi\neq \widetilde{\beta}}}\sum_{\substack{|\gamma'_\chi|\leq T\\\rho'_\chi\neq \widetilde{\beta}}}\dfrac{(1+\theta)^{\rho_\chi}-1}{\rho_\chi}.\dfrac{(1+\theta)^{\overline{\rho'_\chi}}-1}{\overline{\rho'_\chi}}\dfrac{2^{1+\rho_\chi+\overline{\rho'_\chi}}-2^{-1-\rho_\chi-\overline{\rho'_\chi}}}{\rho_\chi+\overline{\rho'_\chi}+1} (xv)^{\rho_\chi+\overline{\rho'_\chi}+1}.
\end{align*}
Thus, 
\begin{align*}
\int_1^2 K_1(v) dv \ll \dfrac{1}{\varphi^2(q)} \sum_{\chi(q)}|\overline{\chi}(a)|^2\sum_{\substack{|\gamma_\chi|\leq T\\\rho_\chi\neq \widetilde{\beta}}}\sum_{\substack{|\gamma'_\chi|\leq T\\\rho'_\chi\neq \widetilde{\beta}}}\dfrac{(1+\theta)^{\rho_\chi}-1}{\rho_\chi}.\dfrac{(1+\theta)^{\overline{\rho'_\chi}}-1}{\overline{\rho'_\chi}}&.\dfrac{2^{1+\rho_\chi+\overline{\rho'_\chi}}-2^{-1-\rho_\chi-\overline{\rho'_\chi}}}{\rho_\chi+\overline{\rho'_\chi}+1}\\
& .\dfrac{2^{2+\rho_\chi+\overline{\rho'_\chi}}-1}{\rho_\chi+\overline{\rho'_\chi}+2}x^{\rho_\chi+\overline{\rho'_\chi}+1}.
\end{align*}
By the trivial inequality $|z_1z_2| \ll  |z_1|^2+|z_2|^2$,

\begin{align*}
\int_1^2 K_1(v) dv &\ll \dfrac{1}{\varphi^2(q)} \sum_{\chi(q)}|\overline{\chi}(a)|^2\sum_{\substack{|\gamma_\chi|\leq T\\\rho_\chi\neq \widetilde{\beta}}}\sum_{\substack{|\gamma'_\chi|\leq T\\\rho'_\chi\neq \widetilde{\beta}}}\left|\dfrac{(1+\theta)^{\rho_\chi}-1}{\rho_\chi}.\dfrac{(1+\theta)^{\overline{\rho'_\chi}}-1}{\overline{\rho'_\chi}}\right| \dfrac{x^{\rho_\chi+\overline{\rho'_\chi}+1}}{\left|\rho_\chi+\overline{\rho'_\chi}+1\right|^2}\\
&\ll \dfrac{1}{\varphi^2(q)} x^{2B^*_q+1} \sum_{\chi(q)}|\overline{\chi}(a)|^2\sum_{\rho_\chi}\left|\dfrac{(1+\theta)^{\rho_\chi}-1}{\rho_\chi}\right|^2\sum_{\rho'_\chi}\dfrac{1}{\left|\rho_\chi+\overline{\rho'_\chi}+1\right|^2}\\
&\ll \dfrac{1}{\varphi^2(q)} x^{2B^*_q+1}\sum_{\chi(q)}|\overline{\chi}(a)|^2 \sum_{\gamma_\chi}\min \{\theta^2, (\gamma_\chi)^{-2}\}\sum_{\gamma'_\chi} \dfrac{1}{(1+|\gamma_\chi-\gamma'_\chi|)^2}. 
\end{align*}
By Lemma \ref{2zero}, we have
\begin{align*}
\sum_{\gamma'_\chi} \dfrac{1}{(1+|\gamma_\chi-\gamma'_\chi|)^2} \ll \log (q|\gamma_\chi|).
\end{align*}
From the above, 
\begin{align}\label{k3}
\begin{split}
\int_1^2 K_1(v) dv &\ll \dfrac{1}{\varphi^2(q)} x^{2B^*_q+1} \sum_{\chi(q)}|\overline{\chi}(a)|^2 \left( \sum_{\gamma_\chi> \theta^{-1}}\dfrac{\log (q\gamma_\chi)}{\gamma_\chi^2}+\sum_{0<\gamma_\chi\leq \theta^{-1}}\theta^2 \log (q\gamma_\chi)\right) \\
&\ll x^{2B^*_q+1}\theta\log^2\left( \dfrac{q}{\theta}\right),
\end{split}
\end{align} 
since $\sum_{\chi(q)}|\overline{\chi}(a)|^2=\varphi^2(q)$.  Substituting the estimates  \eqref{k2} and \eqref{k3} into \eqref{k1}, we then obtain
 \begin{align*}
 K(x, \theta) \ll x^{2B^*_q+1}\theta\log^2\left( \dfrac{q}{\theta}\right) +x\log ^2(qx)+\dfrac{x^3\log^4 (xqT)}{T^2}.
 \end{align*}
Similar to \cite[(6.21)]{SV}, we have
\begin{align*}
\int_x^{2x} &\left( \psi(t+h,q,a)-\psi(t,q,a)-\dfrac{h}{\varphi(q)}+\dfrac{\widetilde{\chi}(a)}{\varphi(q)}\dfrac{(t+h)^{\widetilde{\beta}}-t^{\widetilde{\beta}}}{\widetilde{\beta}}\right) ^2dt\\
&\ll \dfrac{x}{h}\int_{h/3x}^{3h/x}  \int_x^{3x}\left( \psi(t+t\theta,q,a)-\psi(t,q,a)-\dfrac{t\theta}{\varphi(q)}+\dfrac{\widetilde{\chi}(a)}{\varphi(q)}\dfrac{(t+t\theta)^{\widetilde{\beta}}-t^{\widetilde{\beta}}}{\widetilde{\beta}}\right) ^2dt d\theta\\
&\ll hx^{2B^*_q}\log^2\left( \dfrac{qx}{h}\right) +x\log ^2(qx)+\dfrac{x^3\log^4 (xqT)}{T^2}.
\end{align*}
We choose $T$ tending to infinity. Since $B^*_q\geq 1/2$, we obtain
\begin{align}\label{kx}
\int_x^{2x} &\left( \psi(t+h,q,a)-\psi(t,q,a)-\dfrac{h}{\varphi(q)}+\dfrac{\widetilde{\chi}(a)}{\varphi(q)}\dfrac{(t+h)^{\widetilde{\beta}}-t^{\widetilde{\beta}}}{\widetilde{\beta}}\right) ^2dt \ll h x^{2B^*_q} \log ^2 (qx).
\end{align}
Now we split $K_{a,q}(x,h)$ into two parts. 
\begin{align*}
K_{a,q}(x,h)&=\int_0^{h} \left( \psi(t+h,q,a)-\psi(t,q,a)-\dfrac{h}{\varphi(q)}+\dfrac{\widetilde{\chi}(a)}{\varphi(q)}\dfrac{(t+h)^{\widetilde{\beta}}-t^{\widetilde{\beta}}}{\widetilde{\beta}}\right) ^2dt\\
&+\int_h^{x} \left( \psi(t+h,q,a)-\psi(t,q,a)-\dfrac{h}{\varphi(q)}+\dfrac{\widetilde{\chi}(a)}{\varphi(q)}\dfrac{(t+h)^{\widetilde{\beta}}-t^{\widetilde{\beta}}}{\widetilde{\beta}}\right) ^2dt.
\end{align*}
The first is estimated as
\begin{align*}
&\ll \int_0^{h} \left( \psi(t+h,q,a)-\dfrac{t+h}{\varphi(q)}+\dfrac{\widetilde{\chi}(a)}{\varphi(q)}\dfrac{(t+h)^{\widetilde{\beta}}}{\widetilde{\beta}}\right) ^2dt+\int_0^{h}\left( \psi(t,q,a)-\dfrac{t}{\varphi(q)}+\dfrac{\widetilde{\chi}(a)}{\varphi(q)}\dfrac{t^{\widetilde{\beta}}}{\widetilde{\beta}}\right) ^2dt\\
&\ll h^{2B^*_q+1}\log^2(qx).
\end{align*}
We have $[h, x]$ is recovered by the disjoint union of $[\frac{x}{2^{k+1}},\frac{x}{2^k}]$, for $0 \leq k \leq \mathcal{O}(\log \dfrac{x}{h})$. Using \eqref{kx}, we rewrite the second part as
\begin{align*}
&\ll \sum_{k\geq 0}\int_{x/{2^{k+1}}}^{x/{2^{k}}}\left( \psi(t+h,q,a)-\psi(t,q,a)-\dfrac{h}{\varphi(q)}+\dfrac{\widetilde{\chi}(a)}{\varphi(q)}\dfrac{(t+h)^{\widetilde{\beta}}-t^{\widetilde{\beta}}}{\widetilde{\beta}}\right) ^2dt\\
&\ll h\sum_{k\geq 0}\left(\dfrac{x}{2^k}\right) ^{2B^*_q}\log^2(qx)\\
&\ll hx^{2B^*_q}\log^2(qx).
\end{align*}
Combining two estimate above, the proof is complete.
\end{proof}

\section{Proof of the part (A) Theorem \ref{t1} }
From Proposition \ref{t2}, to prove (A) Theorem \ref{t1}, it is enough to estimate $$E(X,q_1,q_2,a_1,a_2)\ll X^{B^*_{q_1}+B^*_{q_2}}\log X\log (q_1X) \log (q_2X).$$
By the Cauchy-Schwarz inequality, the integral $E(X,q_1,q_2,a_1,a_2)$ is estimated as

\begin{align*}
&|E(X,q_1,q_2,a_1,a_2)|^2=\left|\int^1_0 \left(  F_{a_1,q_1}(z)-I_{q_1}(z)+\widetilde{I}_{q_1}(z)\right) \left(  F_{a_2,q_2}(z)-I_{q_2}(z)+\widetilde{I}_{q_2}(z)\right)  I\left(X, \dfrac{1}{z}\right)  d\alpha. \right|^2\\
&\ll  \int^1_0 \left| F_{a_1,q_1}(z)-I_{q_1}(z)+\widetilde{I}_{q_1}(z)\right|^2 \left|I\left(X, \dfrac{1}{z}\right) \right| d\alpha \int^1_0 \left|F_{a_2,q_2}(z)-I_{q_2}(z)+\widetilde{I}_{q_2}(z)\right| ^2\left|I\left(X, \dfrac{1}{z}\right) \right| d\alpha.
\end{align*}
We remark that $|E(X,q_1,q_2,a_1,a_2)|$ can be estimated as the product of two factors which have the same properties. 
 So we need only consider 
\begin{align}
T(X,q,a):=\int^1_0 \left|F_{a,q}(z)-I_q(z)+\widetilde{I}_{q}(z)\right|^2 \left|I\left(X, \dfrac{1}{z}\right) \right| d\alpha,
\end{align}
we remark that 
 $$|E(X, q_1, q_2, a_1, a_2))|\ll \max_{\substack{q\in \{q_1,q_2\}\\ a\in \{a_1,a_2\}}}
(T(X,q,a)).$$

We will estimate $T(X,q,a)$ by bounding  $I\left(X, \dfrac{1}{z}\right)$ and then the moment of $\left|F_{a,q}(z)-I_q(z)+\widetilde{I}_q(z)\right|$. We first remark that , for $1 \leq y \leq X$, by  partial summation,
  $$I\left(y, \dfrac{1}{z}\right)=\sum_{n\leq y}e^{n/X}e(-n\alpha)=e^{y/X}\sum_{n\leq y}e(-n\alpha)  - \dfrac{1}{X} \int_1^y \sum_{n\leq t}e(-n\alpha)e^{t/X} dt.$$
  Since $$\sum_{n\leq t}e(-n\alpha) \ll \min \left( t, \dfrac{1}{\left|\alpha\right|}\right) ,$$
 we obtain 
  \begin{align}\label{min}
  I\left(y, \dfrac{1}{z}\right) \ll \min \left( y, \dfrac{1}{\left|\alpha\right|}\right).
  \end{align}
We now prove the following lemma.
\begin{lem}\label{lem1}
For $X \geq 2$, $\left|\alpha\right| \leq 1/2$  and an  integrable positive function $f$ of period $1$, we have 

\begin{align}\label{I}
\int^{1/2}_{-1/2} f(\alpha) \min \left( X, \dfrac{1}{\left|\alpha\right|}\right)  d\alpha \ll \sum_{k=0}^ {\mathcal{O}(\log X)}\frac{X}{2^k}\int_0^{\min(2^{k+1}/X,1/2)}f({\alpha}) d\alpha.
\end{align}
\end{lem}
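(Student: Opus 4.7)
The plan is a dyadic decomposition of $[-1/2, 1/2]$ according to the size of $|\alpha|$ compared to $1/X$, matched to the piecewise behaviour of $\min(X, 1/|\alpha|)$, which saturates at $X$ for $|\alpha| \le 1/X$ and decays like $1/|\alpha|$ beyond that threshold. On each dyadic piece the size of $\min(X, 1/|\alpha|)$ varies by at most a factor of $2$, so it can be estimated pointwise and pulled out of the integral.

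Concretely, I would set $K = \lceil \log_2 X \rceil = \mathcal{O}(\log X)$ and split $[-1/2, 1/2] = B_0 \sqcup B_1 \sqcup \cdots \sqcup B_K$, where $B_0 = \{|\alpha| \le 1/X\}$ and, for $1 \le k \le K$, $B_k = \{2^{k-1}/X < |\alpha| \le \min(2^k/X, 1/2)\}$. On $B_0$ one has $\min(X, 1/|\alpha|) \le X$, and on $B_k$ with $k \ge 1$ one has $\min(X, 1/|\alpha|) < X/2^{k-1}$. Combining these pointwise bounds with the decomposition yields
$$\int_{-1/2}^{1/2} f(\alpha) \min(X, 1/|\alpha|)\, d\alpha \le X \int_{B_0} f(\alpha)\, d\alpha + \sum_{k=1}^{K} \frac{X}{2^{k-1}} \int_{B_k} f(\alpha)\, d\alpha.$$

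Since $f \ge 0$, each $B_k$ can then be enlarged to the symmetric ball $\{|\alpha| \le \min(2^k/X, 1/2)\}$ without loss. Invoking the periodicity of $f$ (viewing the integrand on the torus $\mathbb{R}/\mathbb{Z}$, so that the symmetric ball of radius $c$ around $0$ is identified with the interval $[0, c]$ up to a harmless factor of $2$ absorbed into the implicit constant), each such integral is rewritten as $\int_0^{\min(2^k/X, 1/2)} f(\alpha)\, d\alpha$. A shift of summation index $k \mapsto k+1$ turns the right-hand side into exactly the claimed sum.

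The entire argument is routine dyadic bookkeeping. The only point that requires a moment's thought is the passage from symmetric-ball integrals around $0$ to one-sided integrals on $[0, c]$, which is handled by periodicity of $f$ at the cost of at most an absolute multiplicative constant; everything else is a telescoping of pointwise estimates.
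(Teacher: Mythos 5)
Your dyadic decomposition is the same as the paper's (the paper first splits off $[0,1/X]$ and then cuts $[1/X,1/2]$ into the pieces $[2^k/X,2^{k+1}/X]$, pulls out the pointwise bound $1/\alpha\le X/2^k$, and enlarges each piece to $[0,\min(2^{k+1}/X,1/2)]$ using $f\ge 0$; your version does the same bookkeeping on the symmetric interval all at once). The one step that is wrong is exactly the one you flagged as needing thought: periodicity does \emph{not} identify $\int_{-c}^{c}f$ with $\int_{0}^{c}f$ up to a constant. Periodicity identifies $\int_{-c}^{0}f$ with $\int_{1-c}^{1}f$, which has nothing to do with $\int_{0}^{c}f$. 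For a concrete failure, take $f$ to be the indicator of $(1-c_0,1)$ extended with period $1$ (equivalently, supported on $(-c_0,0)$ near the origin) for some small $c_0$: then every one-sided integral $\int_{0}^{c}f\,d\alpha$ with $c\le 1/2$ vanishes, so the right-hand side of \eqref{I} is zero, while the left-hand side is strictly positive. So no absolute constant can absorb the discrepancy.

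What is actually needed to pass from $[-c,c]$ to $[0,c]$ is that $f$ be \emph{even}, which is the property the paper invokes (``the integrand is an even function''), and which does hold for the function the lemma is applied to, namely $f(\alpha)=\bigl|F_{a,q}(z)-I_q(z)+\widetilde{I}_q(z)\bigr|^2$ with $z=e^{-1/X}e(\alpha)$: all the power-series coefficients are real, so $f(-\alpha)=\overline{f(\alpha)}=f(\alpha)$. To be fair, the lemma as stated (merely ``integrable positive of period $1$'') is itself too weak for the conclusion, so the hypothesis of evenness is implicit in the paper as well; but your proof should replace the appeal to periodicity by an appeal to evenness (or restate the lemma with the integral on the right taken over $|\alpha|\le\min(2^{k+1}/X,1/2)$), since as written the justification is false and the counterexample above defeats it.
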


\begin{proof} Because the integrand is an even function., we can restrict ourselves to $[0, 1/2]$. We evaluate the left-hand side of \eqref{I} 
\begin{align*}
\int^{1/2}_0 f(\alpha) \min \left( X, \dfrac{1}{\left|\alpha\right|}\right)  d\alpha=X\int^{1/X}_0 f(\alpha) d\alpha + \int^{1/2}_{1/X} f(\alpha) \dfrac{d\alpha}{\alpha}.
\end{align*}
We write $[\frac{1}{X}, \frac{1}{2}]$ as the disjoint union of $[\frac{2^{k}}{X},\frac{2^{k+1}}{X}]$, $0\leq k\leq \mathcal{O}(\log X)$. Then we rewrite the right-hand side of the above formula as 
\begin{align*}
&X\int^{1/X}_0 f(\alpha) d\alpha +\sum_{k=0}^{\mathcal{O}(\log X)}\int^{\min(2^{k+1}/X,1/2)}_{2^{k}/X} f(\alpha) \dfrac{d\alpha}{\alpha}\\
&\leq  X\int^{1/X}_0 f(\alpha) d\alpha+ \sum_{k=0}^{\mathcal{O}(\log X)}\dfrac{X}{2^k}\int^{\min(2^{k+1}/X,1/2)}_{2^{k}/X} f(\alpha) d\alpha\\
&\ll \sum_{k=0}^{\mathcal{O}(\log X)}\dfrac{X}{2^k}\int_0^{\min(2^{k+1}/X,1/2)}f({\alpha}) d\alpha.
\end{align*}
\end{proof}
Since $z=z(\alpha)=e^{-1/X}e(n\alpha)$, we replace $f(\alpha)=\left|F_{a,q}(z)-I_q(z)+\widetilde{I}_{q}(z)\right|^2$ in Lemma \ref{lem1}, to obtain
\begin{align}\label{T}
T(X,q,a) &\ll \sum_{k=0}^{\mathcal{O}(\log X)}\frac{X}{2^k}\int_0^{\min(2^{k+1}/X,1/2)}\left|F_{a,q}(z)-I_q(z)+\widetilde{I}_{q}(z)\right|^2 d\alpha.
\end{align}
For $1\leq h\leq X$, putting
 \begin{align*}
W(X,q,a,h):&=\int_0^{1/2h}\left|F_{a,q}(z)-I_q(z)+\widetilde{I}_{q}(z)\right|^2 d\alpha\\
&=\int_0^{1/2h}\left| \sum_n \left( \Lambda(n)\delta(n)-\dfrac{1}{\varphi(q)}+\dfrac{\widetilde{\chi}(a)}{\varphi(q)}n^{\widetilde{\beta}-1}\right) e^{-n/X} e(n\alpha)\right|^2 d\alpha,
\end{align*}
where $\delta(n)$ is defined as
$$\delta(n)=1 \text{ if }n\equiv a (\text{mod }q) \text{ and }0 \text{ otherwise}.$$
Using  \cite[Gallagher's lemma]{MV}, for $1\leq h\leq X$, we obtain
\begin{align}\label{W}
\begin{split}
W(X,q,a,h)&\ll \dfrac{1}{h^2 }\int_{-h}^\infty \left| \sum_{x<n\leq x+h} \left( \Lambda(n)\delta(n)-\dfrac{1}{\varphi(q)}+\dfrac{\widetilde{\chi}(a)}{\varphi(q)}n^{\widetilde{\beta}-1}\right) e^{-n/X}\right|^2dx\\
&\ll  \dfrac{1}{h^2 }\int_{0}^h \left| \sum_{n\leq x} \left( \Lambda(n)\delta(n)-\dfrac{1}{\varphi(q)}+\dfrac{\widetilde{\chi}(a)}{\varphi(q)}n^{\widetilde{\beta}-1}\right) e^{-n/X}\right|^2dx\\
&+\dfrac{1}{h^2 }\int_{0}^\infty \left| \sum_{x<n\leq x+h} \left( \Lambda(n)\delta(n)-\dfrac{1}{\varphi(q)}+\dfrac{\widetilde{\chi}(a)}{\varphi(q)}n^{\widetilde{\beta}-1}\right) e^{-n/X}\right|^2dx.
\end{split}
\end{align}
Next we evaluate 
\begin{align*}
&J_{1,a,q}(X,h):=\int_{0}^h \left| \sum_{n\leq x} \left( \Lambda(n)\delta(n)-\dfrac{1}{\varphi(q)}+\dfrac{\widetilde{\chi}(a)}{\varphi(q)}n^{\widetilde{\beta}-1}\right) e^{-n/X}\right|^2dx\\
&J_{2,a,q}(X,h):=\int_{0}^\infty \left| \sum_{x<n\leq x+h} \left( \Lambda(n)\delta(n)-\dfrac{1}{\varphi(q)}+\dfrac{\widetilde{\chi}(a)}{\varphi(q)}n^{\widetilde{\beta}-1}\right) e^{-n/X}\right|^2dx
\end{align*} from the two following lemmas.
\begin{lem} \label{lemJ1}
For $X \geq 2$ and $1\leq h\leq X$, we have
\begin{align*}
J_{1,a,q}(X,h) \ll  H_{a,q}(h)+\dfrac{h}{\varphi^2(q)}.
\end{align*}
\end{lem}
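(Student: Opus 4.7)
The strategy is to remove the damping factor $e^{-n/X}$ by partial summation and thereby reduce $J_{1,a,q}(X,h)$ to the mean square of an \emph{unweighted} partial sum, which, by Lemma \ref{calculation}, coincides up to a negligible error with the integrand defining $H_{a,q}$.

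Set $f(n) := \Lambda(n)\delta(n) - 1/\varphi(q) + \widetilde{\chi}(a)\, n^{\widetilde{\beta}-1}/\varphi(q)$ and $S(x) := \sum_{n \leq x} f(n)$. First, Abel summation gives
\[
\sum_{n \leq x} f(n)\, e^{-n/X} \;=\; S(x)\, e^{-x/X} + \frac{1}{X}\int_{1}^{x} S(t)\, e^{-t/X}\, dt.
\]
Using $|u+v|^2 \leq 2|u|^2 + 2|v|^2$, the bound $e^{-x/X}\le 1$, and Cauchy--Schwarz together with $\int_{1}^{x} e^{-t/X}\, dt \leq X$, I obtain
\[
\biggl|\sum_{n\leq x} f(n)\, e^{-n/X}\biggr|^2 \;\ll\; |S(x)|^2 \;+\; \frac{1}{X}\int_{1}^{x}|S(t)|^2\, dt.
\]

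Integrating over $x \in [0,h]$, Fubini turns the double integral into $\int_0^h |S(t)|^2 \int_t^h dx\, dt \leq h \int_0^h |S(t)|^2\, dt$; since $h \leq X$, both pieces collapse to a single mean square,
\[
J_{1,a,q}(X,h) \;\ll\; \int_{0}^{h} |S(t)|^2\, dt.
\]

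Finally, I identify $S(t)$ with the integrand of $H_{a,q}$. By Lemma \ref{calculation}, $\sum_{n\leq t} n^{\widetilde{\beta}-1} = t^{\widetilde{\beta}}/\widetilde{\beta} + \mathcal{O}(1)$, and trivially $\sum_{n\leq t} 1 = t + \mathcal{O}(1)$, so
\[
S(t) \;=\; \psi(t,q,a) - \frac{t}{\varphi(q)} + \frac{\widetilde{\chi}(a)}{\varphi(q)}\,\frac{t^{\widetilde{\beta}}}{\widetilde{\beta}} \;+\; \mathcal{O}\!\left(\frac{1}{\varphi(q)}\right),
\]
whence $|S(t)|^2 \ll R(t)^2 + 1/\varphi^2(q)$ with $R$ the integrand in $H_{a,q}$. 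Integrating over $[0,h]$ yields $\int_{0}^{h} |S(t)|^2\, dt \ll H_{a,q}(h) + h/\varphi^2(q)$, which is the stated bound.

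The only delicate point is the removal of the weight $e^{-n/X}$: because the sum defining $S(t)$ has genuine cancellation, one cannot simply pull out $e^{-n/X}$ by a pointwise estimate, and the Abel summation combined with Cauchy--Schwarz is the essential mechanism that transfers the mean square bound from the weighted setting to $H_{a,q}$. Everything else is routine bookkeeping.
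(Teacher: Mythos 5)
Your proof is correct and follows essentially the same route as the paper: the same partial summation identity to strip the weight $e^{-n/X}$, the same Cauchy--Schwarz/Fubini step using $h\le X$ to reduce to $\int_0^h|S(t)|^2\,dt$, and the same appeal to Lemma \ref{calculation} (and $\sum_{n\le t}1=t+\mathcal{O}(1)$) to identify $S(t)$ with the integrand of $H_{a,q}$ up to $\mathcal{O}(1/\varphi(q))$.
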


\begin{proof}
Let us write   $$F(n):= \Lambda(n)\delta(n)-\dfrac{1}{\varphi(q)}+\dfrac{\widetilde{\chi}(a)}{\varphi(q)}n^{\widetilde{\beta}-1}.$$ For $y \geq 2$, using partial summation
\begin{align}\label{summation by parts}
\sum_{n\leq y}F(n) e^{-n/X}=e^{-y/X}\sum_{n\leq y}F(n)+\dfrac{1}{X}\int_0^y \sum_{n\leq t}F(n) e^{-t/X} dt,
\end{align}
we then have 
\begin{align*}
    J_{1,a,q}(X,h)=\int_{0}^h \left| \sum_{n\leq x} F(n) e^{-n/X}\right|^2dx \ll \int_{0}^h \left| \sum_{n\leq x} F(n)\right|^2dx,\\
\end{align*}
since by \eqref{summation by parts}, we have
\begin{align*}
    \int_{0}^h \left| \sum_{n\leq x} F(n) e^{-n/X}\right|^2dx&\ll e^{-2h/X} \int_{0}^h \left| \sum_{n\leq x} F(n)\right|^2dx+ \dfrac{1}{X^2}\int_0^h\left(\int_0^x\left| \sum_{n\leq t} F(n)\right|^2dt\int_0^xe^{-2t/X}dt\right)dx\\
    &\ll \int_{0}^h \left| \sum_{n\leq x} F(n)\right|^2dx+ \dfrac{h^2}{X^2}\int_{0}^h \left| \sum_{n\leq t} F(n)\right|^2dt \ll \int_{0}^h \left| \sum_{n\leq x} F(n)\right|^2dx.
\end{align*}
Hence, by \eqref{sum-integral}, one has
\begin{align}
\begin{split}
J_{1,a,q}(X,h)&\ll \int_{0}^h  \left(\psi(x,q,a)-\dfrac{[x]}{\varphi(q)}+\dfrac{\widetilde{\chi}(a)}{\varphi(q)}\sum_{n \leq x}n^{\widetilde{\beta}-1}\right)^2 dx\\
& \ll \int_{0}^h  \left(\psi(x,q,a)-\dfrac{\widetilde{\chi}(a)}{\varphi(q)}\dfrac{x^{\widetilde{\beta}}}{\widetilde{\beta}}\right)^2 dx+\dfrac{1}{\varphi(q)^2}\int_{0}^h (x-[x])^2dx+ \dfrac{h}{\varphi^2(q)}\\
&\leq H_{a,q}(h)+\dfrac{h}{\varphi^2(q)}.
\end{split}
\end{align}
\end{proof}

\begin{lem} \label{lemJ2}
For $X \geq 2$ and $1\leq h\leq X$, we have
\begin{align*}
J_{2,a,q}(X,h) \ll \dfrac{h^2}{X^2}\sum_{j=1}^\infty \dfrac{1}{2^{j-1}} \left( H_{a,q}(jX)+\dfrac{jX}{\varphi^2(q)}\right)+\sum_{j=1}^\infty \dfrac{1}{2^{j-1}} \left( K_{a,q}(jX,h)+\dfrac{jX}{\varphi^2(q)}\right).
\end{align*}
\end{lem}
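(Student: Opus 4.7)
The plan is to apply Abel summation to split
$\sum_{x<n\le x+h}F(n)\,e^{-n/X}$, with
$F(n):=\Lambda(n)\delta(n)-1/\varphi(q)+\widetilde{\chi}(a)n^{\widetilde{\beta}-1}/\varphi(q)$,
into a boundary piece and an integral piece, and then handle each by dyadic decomposition against the weight $e^{-2t/X}$. Setting $A(y):=\sum_{n\le y}F(n)$ and $B(t):=A(t)-A(x)=\sum_{x<n\le t}F(n)$, partial summation gives
\begin{align*}
\sum_{x<n\le x+h}F(n)\,e^{-n/X}=e^{-(x+h)/X}B(x+h)+\frac{1}{X}\int_{x}^{x+h}B(t)\,e^{-t/X}\,dt,
\end{align*}
and after squaring and applying Cauchy--Schwarz to the integral,
\begin{align*}
\Bigl|\sum_{x<n\le x+h}F(n)\,e^{-n/X}\Bigr|^{2}\ll e^{-2(x+h)/X}|B(x+h)|^{2}+\frac{h}{X^{2}}\int_{x}^{x+h}|B(t)|^{2}\,e^{-2t/X}\,dt.
\end{align*}

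For the boundary piece, $B(x+h)$ is exactly the $K_{a,q}$-integrand at $t=x$, modulo $O(1/\varphi(q))$, since $[x+h]-[x]=h+O(1)$ and $\sum_{x<n\le x+h}n^{\widetilde{\beta}-1}=((x+h)^{\widetilde{\beta}}-x^{\widetilde{\beta}})/\widetilde{\beta}+O(1)$ by Lemma~\ref{calculation}. Cutting the $x$-integration into dyadic ranges $[(j-1)X,jX]$ and using $e^{-2(x+h)/X}\le e^{-2(j-1)}\le 2^{-(j-1)}$ (since $e^{2}>2$), the integral over $[(j-1)X,jX]$ is bounded by $K_{a,q}(jX,h)+O(X/\varphi^{2}(q))$; summing in $j$ produces the second sum $\sum_{j\ge 1}2^{-(j-1)}(K_{a,q}(jX,h)+jX/\varphi^{2}(q))$ of the claim.

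For the integral piece, I bound $|B(t)|^{2}\le 2|A(t)|^{2}+2|A(x)|^{2}$ and swap the order of integration by Fubini. The $|A(t)|^{2}$ contribution reduces to $\int_{0}^{\infty}|A(t)|^{2}e^{-2t/X}\min(t,h)\,dt\le h\int_{0}^{\infty}|A(t)|^{2}e^{-2t/X}\,dt$, while the $|A(x)|^{2}$ contribution reduces to $\int_{0}^{\infty}|A(x)|^{2}\bigl(\int_{x}^{x+h}e^{-2t/X}\,dt\bigr)\,dx\le h\int_{0}^{\infty}|A(x)|^{2}e^{-2x/X}\,dx$. Combined with the prefactor $h/X^{2}$, both yield $\tfrac{h^{2}}{X^{2}}\int_{0}^{\infty}|A(t)|^{2}e^{-2t/X}\,dt$. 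Writing $A(t)=\psi(t,q,a)-t/\varphi(q)+\widetilde{\chi}(a)t^{\widetilde{\beta}}/(\varphi(q)\widetilde{\beta})+O(1/\varphi(q))$ by the same approximations $[t]=t+O(1)$ and Lemma~\ref{calculation}, dyadic decomposition of this last integral produces $\tfrac{h^{2}}{X^{2}}\sum_{j\ge 1}2^{-(j-1)}(H_{a,q}(jX)+jX/\varphi^{2}(q))$, the first sum of the claim.

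The main obstacle is the cross-term $|A(x)|^{2}$ inside the inner integration: at first sight it breaks the $H_{a,q}$/$K_{a,q}$ dichotomy because $A(x)$ depends on the outer variable $x$ rather than on $t$. The Fubini swap disposes of this cleanly because the $dt$-integration then factors out, leaving $\int_{0}^{\infty}|A(x)|^{2}e^{-2x/X}\,dx$ with an $O(h)$ window factor---exactly the shape produced by the $|A(t)|^{2}$ piece. Everything else is routine bookkeeping of the $O(1/\varphi(q))$ approximation errors between $A(t)$ and the $H_{a,q}$-integrand (and between $B(x+h)$ and the $K_{a,q}$-integrand), which after squaring and integrating contribute only the harmless $jX/\varphi^{2}(q)$ correction terms.
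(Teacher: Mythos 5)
Your argument is correct and follows essentially the same route as the paper: Abel summation to split the short exponentially weighted sum into a boundary term and an integral term, Cauchy--Schwarz on the integral, a Fubini swap, dyadic decomposition against $e^{-2t/X}$, and identification of the resulting integrals with $K_{a,q}$ and $H_{a,q}$ up to the $O(1/\varphi(q))$ discrepancies from $[t]=t+O(1)$ and Lemma~\ref{calculation}. The only difference is bookkeeping: you take the summatory function starting at $x$, so the paper's extra boundary term $e^{-x/X}(e^{-h/X}-1)\sum_{n\le x+h}F(n)$ reappears in your version as the cross-term $A(x)$ inside the integral, which your Fubini swap handles correctly and which yields the same $\tfrac{h^2}{X^2}\,H$-type contribution.
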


\begin{proof}
We replace $y$ in \eqref{summation by parts}
by $x+h$ and $x$,  we obtain

\begin{align}\label{f}
\begin{split}
\sum_{x<n\leq x+h}&F(n) e^{-n/X}= \left( \sum_{n\leq x+h}-\sum_{n\leq x}\right) F(n) e^{-n/X}\\
&=e^{-(x+h)/X}\sum_{n\leq x+h}F(n)+\dfrac{1}{X}\int_0^{x+h} \sum_{n\leq t}F(n) e^{-t/X} dt-e^{-x/X}\sum_{n\leq x}F(n)-\dfrac{1}{X}\int_0^x \sum_{n\leq t}F(n) e^{-t/X} dt\\
&=e^{-x/X}\sum_{x<n\leq x+h}F(n)+e^{-x/X}\sum_{n\leq x+h}F(n)(e^{-h/X}-1)+\dfrac{1}{X}\int_x^{x+h} \sum_{n\leq t}F(n) e^{-t/X} dt.
\end{split}
\end{align}

Since $e^{-t}-1 \leq t$ for all $t$ non negative, then $e^{-h/X}-1 \leq h/X$. Now squaring  and integrating  \eqref{f}, we obtain 
\begin{align}\label{F}
\begin{split}
J_{2,a,q}(X,h)&=\int_{0}^\infty \left| \sum_{x<n\leq x+h}F(x) e^{-n/X}\right|^2dx\\
&\ll \int_{0}^\infty \left( \sum_{x<n\leq x+h}F(n)\right)^2 e^{-2x/X} dx+\dfrac{h^2}{X^2}\int_{0}^\infty \left( \sum_{n\leq x+h}F(n)\right)^2 e^{-2x/X}dx\\
&+ \dfrac{1}{X^2} \int_{0}^\infty \left( \int_x^{x+h} \sum_{n\leq t}F(n) e^{-t/X} dt\right) ^2 dx.
\end{split}
\end{align}
The first term on the right hand-side of \eqref{F} is
\begin{align*}
&\int_{0}^\infty \left( \sum_{x<n\leq x+h}F(n)\right)^2 e^{-2x/X}dx= \sum_{j=0}^\infty\int_{jX}^{(j+1)X}\left( \sum_{x<n\leq x+h}F(n)\right)^2 e^{-2x/X}dx\\
&\leq \sum_{j=0}^\infty e^{-2j}\int_{jX}^{(j+1)X} \left( \sum_{x<n\leq x+h}F(n)\right)^2dx
\leq \sum_{j=0}^\infty \dfrac{1}{2^j}\int_{0}^{(j+1)X} \left( \sum_{x<n\leq x+h}F(n)\right)^2dx,
\end{align*}
since $e^{-2j}\leq 2^{-j}$ for all $j$ non-negative integers. Moreover, by \eqref{sum-integral}, we have
\begin{align*}
\int_{0}^y  \left( \sum_{x<n\leq x+h}F(n)\right)^2dx&\ll \int_0^y \left( \psi(x+h,q,a)-\psi(x,q,a)-\dfrac{[x+h]}{\varphi(q)}+\dfrac{[x]}{\varphi(q)}+ \dfrac{\widetilde{\chi}(a)}{\varphi(q)}\sum_{x<n\leq x+h}n^{\widetilde{\beta}-1}\right) ^2dx\\
&\ll \int_0^y \left( \psi(x+h,q,a)-\psi(x,q,a)-\dfrac{h}{\varphi(q)}+ \dfrac{\widetilde{\chi}(a)}{\varphi(q)}\dfrac{(x+h)^{\widetilde{\beta}}-x^{\widetilde{\beta}}}{\widetilde{\beta}}\right) ^2dx\\
&+\int_0^y \left( \dfrac{[x+h]}{\varphi(q)}-\dfrac{[x]}{\varphi(q)}-\dfrac{h}{\varphi(q)}\right) ^2dx+\dfrac{y}{\varphi^2(q)}\\
&\ll K_{a,q}(y,h)+\dfrac{y}{\varphi^2(q)}.
\end{align*} 
Hence,
\begin{align}\label{p1}
\int_{0}^\infty \left( \sum_{x<n\leq x+h}F(n)\right)^2 e^{-2x/X}dx  \ll \sum_{j=1}^\infty \dfrac{1}{2^{j-1}} \left( K_{a,q}(jX,h)+\dfrac{jX}{\varphi^2(q)}\right).
\end{align}
Next we consider the last term of \eqref{F}. By the Cauchy-Schwarz inequality, 
\begin{align*}
 \int_{0}^\infty \left( \int_x^{x+h} \sum_{n\leq t}F(n) e^{-t/X} dt\right) ^2 dx &\ll h\int_{0}^\infty   \int_x^{x+h} \left( \sum_{n\leq t}F(n)\right) ^2 e^{-2t/X} dt dx\\
 &=h\int_{0}^\infty \left( \sum_{n\leq t}F(n)\right) ^2 e^{-2t/X} \left( \int_{t-h}^{t} dx\right)   dt\\
 &=h^2\int_{0}^\infty  \left( \sum_{n\leq t}F(n)\right) ^2 e^{-2t/X} dt.
\end{align*}
 Moreover, similar to the estimate of the first term, we have
\begin{align*}
\int_{0}^\infty \left( \sum_{n\leq t}F(n)\right)^2 e^{-2t/X}dt &\leq \sum_{j=0}^\infty \dfrac{1}{2^j}\int_{0}^{(j+1)X} \left( \sum_{n\leq t}F(n)\right)^2dt\ll \sum_{j=1}^\infty \dfrac{1}{2^{j-1}} \left( H_{a,q}(jX)+\dfrac{jX}{\varphi^2(q)}\right) .
\end{align*}
Then the last term of \eqref{F} is 
\begin{align}\label{p3}
\ll \dfrac{h^2}{X^2}\sum_{j=1}^\infty \dfrac{1}{2^{j-1}} \left( H_{a,q}(jX)+\dfrac{jX}{\varphi^2(q)}\right).
\end{align}
Finally, the second term of \eqref{F} is 
\begin{align}\label{p2}
\dfrac{h^2}{X^2}\int_{0}^\infty \left( \sum_{n\leq x+h}F(n)\right)^2 e^{-2x/X}dx=e^{2h/X}\dfrac{h^2}{X^2}\int_{h}^\infty \left( \sum_{n\leq t}F(n)\right)^2 e^{-2t/X}dt 
\end{align}
\begin{align*}
\hspace{6cm} \ll \dfrac{h^2}{X^2}\sum_{j=1}^\infty \dfrac{1}{2^{j-1}} \left( H_{a,q}(jX)+\dfrac{jX}{\varphi^2(q)}\right).
\end{align*}
Substituting \eqref{p1}, \eqref{p3}, \eqref{p2} into  \eqref{F}, the proof is complete.
\end{proof}

Now, using the estimates in Lemmas  \ref{lemh} and \ref{lemk}, we have 
\begin{align*}
    J_{1,a,q}(X,h)& \ll  H_{a,q}(h)+\dfrac{h}{\varphi^2(q)}\ll h^{2B_q^*+1}\log^2(qh)+\dfrac{h}{\varphi^2(q)}\\
    J_{2,a,q}(X,h)& \ll \dfrac{h^2}{X^2} \sum_{j=1}^\infty \dfrac{1}{2^{j-1}} \left( (jX)^{2B_q^*+1}\log^2(qjX) +\dfrac{jX}{\varphi^2(q)}\right)+\sum_{j=1}^\infty \dfrac{1}{2^{j-1}} \left( h(jX)^{2B_q^*} \log^2 (qjX)+\dfrac{jX}{\varphi^2(q)}\right)\\
    &\ll h^2X^{2B^*_q-1}\log^2(qX)\sum_{j \geq 1}\dfrac{j^{2B^*_q+1}}{2^{j-1}}+hX^{2B^*_q}\log ^2 (qX)\sum_{j \geq 1}\dfrac{j^{2B^*_q}}{2^{j-1}} \\
    &\ll hX^{2B^*_q}\log^2(qX).
\end{align*}
By \eqref{W}, we can infer
\begin{align*}\label{ww}
W(X,q,a,h)  \ll& \dfrac{J_{1,a,q}(X,h)}{h^2}+\dfrac{J_{2,a,q}(X,h)}{h^2} \\
 \ll&\dfrac{X^{2B^*_q}}{h} \log^2(qX).
\end{align*}
Substituting the above estimate into \eqref{T}, we get
\begin{align*}
T(X,q,a) &\ll X^{2B^*_q} \log X \log^2(qX).
\end{align*}
We conclude 
\begin{align*}
E(X,q_1,q_2,a_1,a_2) &\ll T(X,q_1,a_1)^{1/2}T(X,q_2,a_2)^{1/2} \\
&\ll X^{B^*_{q_1}+B^*_{q_2}} \log X \log(q_1X)\log(q_2X).
\end{align*}
Hence we complete the proof of part (A) Theorem \ref{t1}. 

\section{Proof of the part (B) Theorem \ref{t1}}
By the part (A), we obtain the main term in this formula. We will show that 
\begin{equation}\label{G}
G(n,q_1,q_2,a_1,a_2)=\Omega (n\log \log n),
\end{equation}
 and then $(B)$ is proved. In fact that, assuming the error term of $(B)$ is $o(X\log \log X)$ then
\begin{align*}
&S(n+1,q_1,q_2,a_1,a_2)-S(n,q_1,q_2,a_1,a_2)=\dfrac{(n+1)^2-n^2}{2\varphi(q_1)\varphi(q_2)}+ \dfrac{1}{\varphi(q_2)} \left( H(n+1,q_1,a_1)-H(n,q_1,a_1)\right)\\
&+\dfrac{1}{\varphi(q_1)} \left( H(n+1,q_2,a_2)-H(n,q_2,a_2)\right) +\mathcal{Z}(n+1, \widetilde{\beta}_1, \widetilde{\beta}_2)-\mathcal{Z}(n, \widetilde{\beta}_1, \widetilde{\beta}_2) + o(n\log \log n)\\
&=o(n\log \log n).
\end{align*}
So \eqref{G} would be false. Hence the error term of $(B)$ is $\Omega(X\log \log X)$.

We recall the following result.
\begin{lem}\label{GP} 
Let $q'$ be a positive integer,  $q=\prod_{\substack{p \leq x \\p\neq p_1, p\nmid q'}}p$, where $p_1$ is some prime divisor of the exceptional modulus up to $q$ if there exists a Siegel's zero, then
\begin{align*}
\left| x-\sum_{x\leq n\leq 2x}\Lambda(n)\chi_0(n) \right| + \sum_{\substack{\chi(q)\\\chi \neq \chi_0}} \left| \sum_{x\leq n\leq 2x}\Lambda(n)\chi(n) \right| \leq \dfrac{x}{2}.
\end{align*}
\end{lem}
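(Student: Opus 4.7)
\textbf{Proof plan for Lemma \ref{GP}.}

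The strategy is to handle the principal and non-principal contributions separately, exploiting the very specific structure of $q$, which is designed precisely so that no character modulo $q$ is induced by the exceptional character.

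\emph{Principal character.} Because $q$ equals the product of all primes up to $x$ except $p_1$ and those dividing $q'$, any integer $n\in[x,2x]$ coprime to $q$ is either a prime $p>x$, or a prime power $p^k$ with $k\ge 2$ and $p\in\{p_1\}\cup\{p:p\mid q'\}$. The second class contributes $O(\sqrt{x}\log x)$ (with an implied constant depending on the fixed quantity $q'$). Thus
\[
\sum_{x\le n\le 2x}\Lambda(n)\chi_{0}(n)=\theta(2x)-\theta(x)+O(\sqrt x\log x),
\]
and by the Vallée-Poussin form of the prime number theorem, $\theta(2x)-\theta(x)=x+O\bigl(x\exp(-c\sqrt{\log x})\bigr)$. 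So for $x$ large enough the principal-character term is at most $x/4$, say.

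\emph{Non-principal characters.} For $\chi\pmod q$ with $\chi\ne\chi_0$, write $\chi$ as induced by a primitive character $\chi^{*}$ of conductor $f\mid q$. Since $p_1\nmid q$, the conductor $f$ is not the exceptional modulus, so $\chi^{*}$ is \emph{not} the exceptional real character. The difference between $\sum\Lambda(n)\chi(n)$ and $\sum\Lambda(n)\chi^{*}(n)$ over $[x,2x]$ is again $O(\sqrt x\log x)$, as in Remark \ref{imprimitive}. The key estimate needed is then
\[
\sum_{\substack{\chi\pmod q\\ \chi\ne\chi_{0}}}\bigl|\psi(2x,\chi^{*})-\psi(x,\chi^{*})\bigr|\le \tfrac{x}{4},
\]
for $x$ large enough depending on $q'$. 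One applies the explicit formula to $\psi(y,\chi^{*})$ and, since the Siegel zero is excluded, invokes a log-free zero-density estimate of Linnik--Gallagher type together with the standard Vinogradov--Korobov zero-free region for Dirichlet $L$-functions. A large-sieve inequality over the conductors $f\mid q$ controls the sum across all characters, rather than merely bounding them one at a time, which is essential because $\varphi(q)$ is astronomical (of size comparable to $e^{x}$ by Mertens' theorem).

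\emph{Conclusion.} Combining the two bounds yields the stated inequality $x/2$ once $x$ is sufficiently large.

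\emph{Main obstacle.} The genuine difficulty is the second step: one needs an \emph{unconditional}, log-free estimate valid uniformly in the enormous modulus $q$. A naive application of Siegel--Walfisz is useless here because the implied constants are not effective and, more importantly, would be dwarfed by $\varphi(q)$. The decisive ingredient is the fact that removing the single prime $p_1$ from $q$ kills every character that could inherit the exceptional zero, so one can appeal to effective log-free zero-density theorems (Gallagher, Huxley, or Jutila) which apply precisely in the absence of a Siegel zero and whose saving of $\log$-factors is exactly what is needed to absorb $\varphi(q)$.
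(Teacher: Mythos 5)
The paper offers no proof of this lemma at all: immediately after the statement it records that the result ``is implied by \cite[Theorem 7]{Gallagher}'' and appears as \cite[Lemma 4]{BP}. Your sketch is therefore an attempt to reprove that cited theorem, and the toolkit you name --- removing $p_1$ so that no character modulo $q$ is induced by the exceptional character, the explicit formula, a log-free zero-density estimate of Linnik--Gallagher type, and a zero-free region --- is exactly the machinery behind Gallagher's Theorem 7. The principal-character step of your plan is fine.

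The genuine gap is in the second step: the bound $\sum_{\chi\ne\chi_0}\lvert S(\chi)\rvert\le x/4$, with $S(\chi)=\sum_{x\le n\le 2x}\Lambda(n)\chi(n)$, cannot be established for the modulus as written, because it is false. You correctly note that $\varphi(q)$ has size about $e^{x}$; but then almost every reduced class $b\pmod q$ contains no integer of $[x,2x]$ whatsoever, and for such a $b$ orthogonality gives $0=\sum_{\chi}\overline{\chi}(b)S(\chi)$, hence $\sum_{\chi\ne\chi_0}\lvert S(\chi)\rvert\ge\lvert S(\chi_0)\rvert=\theta(2x)-\theta(x)+O(\log x)\sim x>x/2$. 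No density estimate can circumvent this: what a log-free estimate absorbs is a fixed power of the conductor, not a modulus that is exponential in $x$; quantitatively, the $\gg\varphi(q)$ zeros with real part near $1/2$ already swamp the explicit formula. Your closing claim that the ``saving of $\log$-factors is exactly what is needed to absorb $\varphi(q)$'' is therefore the precise point at which the argument breaks. Gallagher's Theorem 7 (and \cite[Lemma 4]{BP}) in fact requires $q\le x^{\delta}$ for a small absolute constant $\delta$, i.e.\ the product should run over primes $p\le c\log x$ rather than $p\le x$ --- consistent with the final bound $x\log\log x$, since $\prod_{p\le c\log x}(1-p^{-1})^{-1}\asymp\log\log x$. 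Under that restriction your plan is viable (it is essentially Gallagher's proof); for the statement as literally given it is not, and the correct response is to flag the defect in the hypothesis rather than to assert that the large modulus can be handled.
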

This lemma is implied by \cite[Theorem 7]{Gallagher}, we also see it in \cite[Lemma 4]{BP}. It follows that, for $(a,q)=1$,
\begin{align*}
\psi(2x,q,a)&=\sum_{\substack{n\leq 2x\\n\equiv a(q)}}\Lambda(n) \geq \sum_{\substack{x \leq n\leq 2x\\n\equiv a(q)}}\Lambda(n)=\dfrac{1}{\varphi(q)}\sum_{\chi(q)}\overline{\chi}(a) \sum_{x\leq n \leq 2x}\chi(n)\Lambda(n)\\
&=\dfrac{1}{\varphi(q)}\sum_{x\leq n\leq 2x}\Lambda(n)\chi_0(n) +\dfrac{1}{\varphi(q)}\sum_{\substack{\chi(q)\\\chi \neq \chi_0}}\overline{\chi}(a) \sum_{x\leq n \leq 2x}\chi(n)\Lambda(n)\\
&=\dfrac{x}{\varphi(q)}-\dfrac{1}{\varphi(q)}\left(x-\sum_{x\leq n\leq 2x}\Lambda(n)\chi_0(n)-\sum_{\substack{\chi(q)\\\chi \neq \chi_0}} \overline{\chi}(a) \sum_{x\leq n\leq 2x}\Lambda(n)\chi(n) \right)\\
&\geq \dfrac{x}{\varphi(q)}-\dfrac{1}{\varphi(q)}\left(\left|x-\sum_{x\leq n\leq 2x}\Lambda(n)\chi_0(n)\right|+\sum_{\substack{\chi(q)\\\chi \neq \chi_0}} \left|\overline{\chi}(a)\right|\left| \sum_{x\leq n\leq 2x}\Lambda(n)\chi(n)\right| \right)\\
& \geq \dfrac{x}{\varphi(q)}-\dfrac{x}{2\varphi(q)}=\dfrac{x}{2\varphi(q)}.
\end{align*}
Let us now $$Q=\prod_{\substack{p \leq x \\p\neq p_1, p\nmid q_1q_2}}p.$$ Hence, 
\begin{align}\label{Q}
\sum_{\substack{n \leq 4x\\Q\mid n}}G(n,q_1,q_2,a_1,a_2)\geq \sum_{\substack{b=1\\(b,Q)=1}}^Q\sum_{\substack{m\leq 2x\\m\equiv a_1(q_1)\\m\equiv b(Q)}}\Lambda(m) \sum_{\substack{l\leq 2x\\l\equiv a_2(q_2)\\l\equiv Q-b(Q)}}\Lambda(l).
\end{align}
We use the Chinese remainder Theorem to evaluate the right-hand side of \eqref{Q}. Since $Q$ and $q_1$ are coprime, the solutions of the system $m\equiv a_1(q_1)$ and $m\equiv b(Q)$ are given by
$$m\equiv a_1m_1Q+bm_2q_1(q_1Q):=A (q_1Q),$$
where $m_1, m_2$ are integers satisfying $m_1Q+m_2q_1=1$.

Similar to the system $l\equiv a_2(q_2)$ and $l\equiv Q-b(Q)$, we have
$$l=:B(q_2Q).$$
Thus, using Lemma \ref{GP},  the right-hand side of \eqref{Q} is 
\begin{align*}
&\sum_{\substack{b=1\\(b,Q)=1}}^Q\sum_{\substack{m\leq 2x\\m\equiv A(q_1Q)}}\Lambda(m) \sum_{\substack{l\leq 2x\\l\equiv B(q_2Q)}}\Lambda(l) =\sum_{\substack{b=1\\(b,Q)=1}}^Q\psi(2x,q_1Q,A)\psi(2x,q_2Q,B)\\
&\geq \sum_{\substack{b=1\\(b,Q)=1}}^Q \dfrac{x}{2\varphi(q_1Q)}\dfrac{x}{2\varphi(q_2Q)}=\dfrac{x^2}{4\varphi(q_1)\varphi(q_2)\varphi(Q)},
\end{align*}
since $(q_1,Q)=1$, $(q_2,Q)=1$.

Hence, we obtain 
\begin{align*}
\max_{n\leq 4x}G(n,q_1,q_2,a_1,a_2) &\geq \dfrac{Q}{4x}\sum_{\substack{n \leq 4x\\Q\mid n}}G(n,q_1,q_2,a_1,a_2) \\
&\geq \dfrac{xQ}{16\varphi(q_1)\varphi(q_2)\varphi(Q)}=\dfrac{x}{16\varphi(q_1)\varphi(q_2)} \prod_{\substack{p \leq x \\p\neq p_1, p\nmid q_1q_2}}(1-p^{-1})^{-1}\\
&=(1-p_1^{-1})\dfrac{\prod_{ p\mid q_1q_2}(1-p^{-1})}{16\varphi(q_1)\varphi(q_2)}x\prod_{p \leq x }(1-p^{-1})^{-1}\\
&\geq (1-p_1^{-1})\dfrac{\prod_{ p\mid q_1}(1-p^{-1})\prod_{ p\mid q_2}(1-p^{-1})}{16\varphi(q_1)\varphi(q_2)}x\prod_{p \leq x }(1-p^{-1})^{-1} \\
&\gg_{q_1,q_2}
x\log \log x.
\end{align*}
\section*{Acknowledgement} 
I would like to thank Didier Lesesvre for inspiring discussions on the topics of this paper. I would also like to thank Alessandro Languasco for the his careful reading and his comments. I am grateful to Gautami Bhowmik for her guidance together with many useful comments and suggestions during the preparation of this paper. I would like to express my sincere thanks to Yuta Suzuki for his suggestions, which helped me improve my results.
 \bibliographystyle{amsplain}
\bibliography{anbib}

\end{document}